\newtheorem{theorem}{Theorem}
\newtheorem{lemma}[theorem]{Lemma}
\newtheorem{remark}[theorem]{Remark}
\newtheorem{claim}[theorem]{Claim}
\newcommand{\f}{\frac}
\renewcommand{\P}{\mathbf{P}}
\newcommand{\ind}[1]{\mathbf{1}{\{ #1 \}}}
\newcommand{\E}{\mathbf E}
\DeclareMathOperator{\Exp}{Exp}
\newcommand{\ta}[1]{}%
\newcommand{\om}[1]{}%
\definecolor{darkgreen}{rgb}{0,0.5,0}
\title{Chase-escape with conversion as a multiple sclerosis lesion model}
\author[]{Emma Bailey} \address{University of Bristol} \email{e.c.bailey@bristol.ac.uk }
\author[]{Erin Beckman} \address{Utah State University} \email{erin.beckman@usu.edu }
\author[]{Sara\'{i} Hern\'{a}ndez-Torres} \address{Universidad Nacional Aut\'onoma de M\'exico} \email{saraiht@im.unam.mx}
\author[]{Matthew Junge} \address{Baruch College}  \email{Matthew.Junge@baruch.cuny.edu}
\author[]{Aanjaneya Kumar} \address{Sante Fe Institute} \email{aanjaneya@santafe.edu}
\author[]{Ann Lee} \address{Hunter College} \email{annleeny9@gmail.com}
\author[]{Danny Li} \address{Baruch College}  \email{danny.li2@baruchmail.cuny.edu}
\author[]{tahda queer} \address{Hunter College and Columbia University} \email{taqueer@proton.me}
\author[]{Alisher Raufov} \address{Baruch College} \email{alisherkhon.raufov@baruchmail.cuny.edu}
\author[]{Lily Reeves} \address{California Institute of Technology} \email{lreeves@caltech.edu}
\author[]{Omer Rondel} \address{Baruch College}  \email{omer.rondel@baruchmail.cuny.edu}
\begin{document}

\begin{abstract}
We introduce conversion to the stochastic process known as chase-escape in an effort to model aspects of inflammatory damage from multiple sclerosis. We prove monotonicity results for aggregate damage for the model on the positive integers, trees, stars, and the complete graph. Additionally, we establish the existence and asymptotic order of a phase transition on bounded degree graphs with a non-trivial site percolation threshold.
\end{abstract}

\thanks{Part of this research was conducted during a Mathematical Research Intensive supported by NSF DMS Grant 2238272 that also partially supports MJ, AL, DL, tq, AR, and OR. SHT acknowledges support from UNAM-PAPIIT grant IA103724. LR is partially supported by the NSF MSPRF Award DMS-2303316. Thanks to Josh Cruz for assistance with Figure 2.}
\maketitle

\section{Introduction}

Multiple sclerosis (MS) is a chronic disease characterized by lesions of damaged white matter in the central nervous system (CNS).  Roughly speaking, MS lesions are formed when inflammatory T-cells recruit macrophages and B-cells to attack myelin in the CNS. The process is eventually suppressed and halted by regulatory T-cells, often leaving behind a lesion of permanently damaged CNS tissue \cite{dobson2019multiple, lassmann2012progressive}.
A prominent issue with studying MS is that it is extremely difficult to obtain dynamical information from patients or even animal models \cite{kotelnikova2015signaling}. 

Mathematical models have the potential to provide unique insights \cite{weatherley2023could}. 
   In their survey article, Weatherly et al.\ describe various MS modeling attempts \cite{weatherley2023could}. Lombardo et al.\ introduced an ordinary differential equations (ODE) model for the interactions among macrophages, chemoattractants and destroyed oligodendrocytes \cite{lombardo2017demyelination}. Kotelnikova et al.\ introduced a different ODE model describing interactions among axons and macrophages \cite{kotelnikova2015signaling}. They showed that their model can be adjusted to match different disease courses in MS patients. In \cite{moise2021mathematical}, Moise and Avner developed a more comprehensive model of lesion formation. It involved a complex system of differential equations involving dozens of agents and a three-dimensional space variable. Their model exhibited quantitative agreement with clinical data that measured total lesion volume in MS patients \cite{comi2001european, lublin2013randomized}. Travaglini recently developed a similar reaction-diffusion equation based approach \cite{model2025relapsing}. Schoonheim et al.\ proposed a network-based approach to describe MS dynamics and postulated that disease impairment heightens with ``network collapse" i.e., when important connective regions accrue too much damage \cite{schoonheim2022network}. 
  
The aforementioned models are deterministic. However, lesion formation appears to be partly driven by local random interactions \cite{lombardo2017demyelination}. Stochastic spatial models for remyelination in the CNS were proposed in \cite{holmes2014interactions,vissatmodelling}. Kim et al.\ introduced a programmed cell death model on random networks \cite{kim2009damage} that was later applied to MS lesion formation \cite{thamattoor2012simulation, mathankumar2013autoimmune}. Their simulation results suggested that preemptively killing cells through a process called apoptosis can limit total damage.

We model the interplay between inflammatory T-cells and regulatory T-cells by generalizing a stochastic growth model known as chase-escape to include spontaneous conversion to account for the arrival of regulatory cells. This line of inquiry aligns with recent experimental gene therapies to treat MS by boosting the production of regulatory T-cells \cite{keeler2018}. 
The two questions we seek to address are: 
\begin{enumerate}
    \item Is CNS tissue damage monotone in the inflammatory and suppression rates?
    \item Does tissue damage exhibit a phase transition?
\end{enumerate} 
Question (1) is not apriori obvious, since rapid inflammation might trigger more vigorous suppression, resulting in less damage as seen in the model from \cite{thamattoor2012simulation}. 
The basic idea of Question (2) is characterizing conditions that allow lesions to reach macroscopic size.

Since we approach these questions with full rigor, our model is a dramatic oversimplification. Nonetheless, our inquiry lies at the theoretical forefront of stochastic growth models. Our primary aim is to advance theoretical tools that may one day be sophisticated enough to capture more salient features of MS.

\subsection{Model definition}

  In our model, inflammatory $r$-particles ``escape" to and damage healthy $w$-sites while being ``chased" and suppressed by regulatory $b$-particles. {We will refer to sites with $r$- or $b$-particles as red or blue, respectively and healthy $w$-sites as white.}
Spontaneous ``conversion" of red to blue sites represents the arrival of regulatory cells that halt inflammation. Thus, red sites represent cells with active inflammation, and blue sites represent damaged cells at which there is no longer inflammation. To distinguish between the two mechanisms by which a blue particle can occupy a site, either by blue spreading to a site through chasing a red particle or by a red particle converting to blue, we will sometimes call the former \emph{chase} or \emph{predation} and the latter  we will refer to as \emph{conversion}.

Formally, \emph{chase-escape with conversion} takes place on a locally finite graph $G$ in which vertices are in one of the three states $\{w,r,b\}$. 
Adjacent vertices in states $(r,w)$ transition to $(r,r)$ according to independent Poisson processes with rate $\lambda$. 
Each vertex in state $r$ transitions to state $b$ according to an independent Poisson process with rate $\alpha$. 
Adjacent $(b,r)$ vertices transition to $(b,b)$ according to independent rate $1$ Poisson processes. 
The standard initial configuration has the root vertex $x_0$ in state $r$ and all other vertices in state $w$. See Figure~\ref{fig:dynamics} for a visual summary of the dynamics  and Figure~\ref{fig:graphs} for some examples of the initial configuration on various graphs. Note that the process is well-defined on any locally finite graph since the memoryless property of Poisson processes allows us to update the always finite configuration of red and blue sites in a Markovian manner. 

\begin{figure}[h!]
    \centering
	\begin{tikzpicture}[scale = .6]

\begin{scope}[shift={(0,0)}]

	\draw [fill=red] (0,0) circle [radius=0.35];
	\draw[thick] (.5,0) -- (1.75,0);
	\draw (2.25,0) circle [radius=0.35];
	\draw[->, thick] (3,0) -- (4,0);
	\draw[fill = red] (4.75,0) circle [radius=0.35];
	\node at (3.4,.5) {{$\lambda$}};
	\draw[thick] (5.25,0) -- (6.25,0);
	\draw[fill = red] (6.75,0) circle [radius=0.35];
\end{scope}

\begin{scope}[shift={(0,-1.5)}]

	\draw [fill=blue] (0,0) circle [radius=0.35];
	\draw[thick] (.5,0) -- (1.75,0);
	\draw[fill =red] (2.25,0) circle [radius=0.35];
	\draw[->, thick] (3,0) -- (4,0);
	\draw[fill = blue] (4.75,0) circle [radius=0.35];
	\node at (3.4,.5) {$1$};
	\draw[thick] (5.25,0) -- (6.25,0);
	\draw[fill = blue] (6.75,0) circle [radius=0.35];

\end{scope}

\begin{scope}[shift={(0,-3)}]
	\draw [fill=red] (2.25,0) circle [radius=0.35];
		\draw[->, thick] (3,0) -- (4,0);
		\node at (3.4,.5) {{$\alpha$}};
	\draw[fill = blue] (4.75,0) circle [radius=0.35];
\end{scope}
\end{tikzpicture}
    \caption{Chase-escape with conversion dynamics.}
    \label{fig:dynamics}
\end{figure}
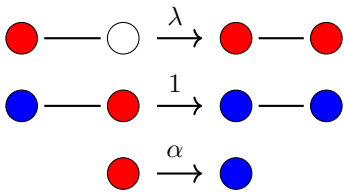

The special case $\alpha =0$ corresponds to the well-studied \emph{chase-escape} model introduced by ecologists Keeling, Rand, and Wilson \cite{keeling1995ecology, rand1995invasion} to study parasite-host relations. Further work has reinterpreted the dynamics as models for: predator-prey systems, rumor scotching, infection spread, and malware repair in a device network \cite{bordenave2014extinction, rumor, de2015process, gilbert, cande}. Note that for chase-escape, {both red and blue sites must be present in the initial configuration, and this is usually implemented by} adding an additional vertex in state $b$ attached to $x_0$. Also note that we could add a third parameter $\beta$ to modify the spreading rate of blue to obtain a larger class of models. For example, setting $\beta=0$ would yield the classical SIR model of infection spread \cite{durrett2010random}. We believe our results still hold for general $\beta>0$. However, since conversion is the main novelty, we opt for a simpler presentation with $\beta$ fixed at $1$.

For chase-escape with conversion on $G$ with site $x_0$ initially in state $r$, let $X=X(G,x_0,\lambda,\alpha)$ be the number of sites in state $b$ when the process eventually fixates. We adopt the convention that $X=\infty$ if the process never fixates. Much of the chase-escape literature has focused on the critical red spreading rate when $G$ is an infinite graph. The analogue for chase-escape with conversion is
\begin{align}
\lambda_c(\alpha) = \lambda_c(\alpha,G,x_0) \coloneqq \sup \{ \lambda \colon \P_{\lambda,\alpha}(X < \infty ) = 1 \}, \label{eq:lc}
\end{align}
above which $X$ is infinite with positive probability \cite{cande}. This definition could  be adapted to growing sequences of finite graphs as was done for chase-escape in \cite{bernstein2021chase}.

Another intriguing feature of chase-escape is the apparent difficulty to prove monotonicity in $\lambda$. Recall that a nonnegative random variable $X'$ is stochastically smaller than another nonnegative random variable $X$ (denoted $X' \preceq X$) if and only if there is a coupling such that $X' \leq X$ almost surely, or equivalently $\P(X' \geq  a) \leq \P(X \geq a)$ for all $a \geq 0$. When $G$ contains cycles, it remains an open problem in most natural settings (for example $\mathbb Z^2$ \cite{kumar2021chase}) to prove stochastic monotonicity of $X$ in $\lambda$.  Although faster spread of red particles is believed to cause red to reach more sites, there is the offsetting effect that more red sites means more opportunities for blue to spread. A similar effect happens with conversion. Faster conversion should reduce $X$,  but more red conversion means fewer red sites for blue to chase.

\subsection{Results}

We are interested in whether or not $X$ is monotone in $\lambda$ and $\alpha$.  When $\alpha =0$, it is straightforward to see that $X$ is monotone in $\lambda$ on trees. However, when $\alpha >0$, the question of monotonicity in $\lambda$ is less clear, even on trees, because speeding up red introduces more conversion opportunities.

Let $\mathbb N$ denote the positive integers $1,2,\hdots$ with root $x_0=1$, 
$S_n$ denote the star graph with root vertex $x_0$ attached to $n$ leaf vertices,
$\mathcal T$ denote a locally finite tree rooted at $x_0$, and $K_n$ denote the complete graph on $n$ vertices, labeled $1,2,\hdots, n$ with root $x_0 = 1$. See Figure~\ref{fig:graphs}. Note that $\mathbb N$ and $S_n$ are special cases of tree graphs.

We say that the number of damaged sites in chase-escape with conversion, $X$, is monotone in $\lambda$ if $X(G,x_0,\lambda',\alpha)\preceq X(G,x_0,\lambda,\alpha)$ for $\lambda' \leq \lambda$, and $X$ is monotone in $\alpha$ if $X(G,x_0,\lambda,\alpha')\preceq X(G,x_0,\lambda,\alpha)$ for $\alpha' \geq \alpha$. We say that $X$ is monotone in $n$ for the process on $S_n$ if $X(S_{n'},\lambda,\alpha,x_0)\preceq X(S_{n},\lambda,\alpha,x_0)$ for all $n' \leq n$, and similarly for monotonicity in $n$ for the process on $K_n$.

\begin{figure}[h!]
\begin{center}
\begin{tikzpicture}[scale=.4, mycircle/.style={draw=black, fill=white, inner sep = .1 cm}]

\def\rad{0.25}

\begin{scope}[xshift=-1.5in]

\draw (-3,0) -- ++(1.5,0)-- ++(1.5,0)-- ++(1.5,0) -- ++(1.5,0) ;
\draw[mycircle,fill=red] (-3,0) circle (\rad);
\foreach \y in {-1.5,0,1.5,3}{
\draw[mycircle] (\y,0) circle (\rad);
}
\end{scope}

\begin{scope}[xshift=0.5in]

\coordinate (B0) at (0,0);
\coordinate (B00) at (1.5,1);
\coordinate (B01) at (1.5,-1);
\coordinate (B000) at (3,1.5);
\coordinate (B001) at (3,0.5);
\coordinate (B010) at (3,-0.5);
\coordinate (B011) at (3,-1.5);
\coordinate (B0000) at (4.5,2.25);
\coordinate (B0001) at (4.5,1.5);
\coordinate (B0010) at (4.5,0.5);
\coordinate (B0100) at (4.5,-0.5);
\coordinate (B0110) at (4.5,-1.5);

\draw 	(B0) -- (B00)
		(B0) -- (B01)
		(B00) -- (B000)
		(B00) -- (B001)
		(B01) -- (B010)
		(B01) -- (B011)
		(B000) -- (B0000)
		(B000) -- (B0001)
		(B001) -- (B0010)
		(B010) -- (B0100)
		(B011) -- (B0110);

\draw[mycircle,fill=red] (B0) circle (\rad);
\foreach \x in {B00,B01,B000,B001,B010,B011,B0000,B0001,B0010, B0100,B0110}{
\draw[mycircle] (\x) circle (\rad);}

\end{scope}

\begin{scope}[xshift=4.2in]

\foreach \x in {1,...,20}{
\coordinate (C\x) at (\x*360/20:2.5);
\draw (0,0) -- (C\x);
\draw[mycircle] (C\x) circle (\rad);
}

\draw[mycircle,fill=red] (0,0) circle (\rad);

\end{scope}

\begin{scope}[xshift=7in]
\def\n{16}

\foreach \x in {1,...,\n}{
	\coordinate (D\x) at (\x*360/16:2.5);
	}

\foreach\x in {1,...,\n}{
	\foreach\y in {1,...,\n}{
		\draw (D\x) -- (D\y);}}

\foreach \x in {1,2,...,7}{
	\draw[mycircle] (D\x) circle (\rad);
 	}
\foreach \x in {9,...,16}{
	\draw[mycircle] (D\x) circle (\rad);
 	}

\draw[mycircle,fill=red] (D8) circle (\rad);

\end{scope}
\end{tikzpicture}
\end{center}
\caption{From left to right: The positive integers $1, \hdots ,5$, a tree with 12 vertices, the star graph $S_{20}$, 
and the complete graph $K_{16}$. The root $x_0$ is shaded red in each example.} \label{fig:graphs}
\end{figure}

\begin{theorem}{\thlabel{thm:mono}} The number of damaged sites $X$ in chase-escape with conversion:

\begin{enumerate}[label = (\roman*)]

    \item Is monotone in $\alpha$ for $\mathcal T, \mathbb N$, $S_n$, and $K_n$.
 
    \item Is monotone in $\lambda$ for  $\mathbb N$, $S_n$, and $K_n$. 

    \item Is monotone in $n$ for $S_n$ and $K_n$.
    
\end{enumerate} 

\end{theorem}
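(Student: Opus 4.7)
The statement splits naturally into a tree-like case ($\mathcal T$, $\mathbb N$, $S_n$), where the unique parent-to-child infection path enables a recursive coupling, and the complete-graph case $K_n$, where vertex exchangeability reduces the process to a Markov chain on occupancy counts. I will handle these in turn, with a short projection coupling for the $n$-monotonicity on $S_n$.

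\textbf{Tree cases: (i) for $\mathcal T$ and (ii) for $\mathbb N, S_n$.} I use the standard clock coupling. Attach to each vertex $v$ an independent conversion clock $\tau_v \sim \Exp(1)$ and to each directed edge $e$ independent infection and chase clocks $\xi_e, \eta_e \sim \Exp(1)$; in the rate-$(\lambda,\alpha)$ process, $v$ converts at time $R_v + \tau_v/\alpha$ and the infection along $e$ fires $\xi_e/\lambda$ after the source becomes red. On any rooted tree each damaged vertex has a unique infecting parent, so the red-lifetime $L_v := B_v - R_v$ satisfies the recursion
\[
L_v = \min\!\bigl(\tau_v/\alpha,\ L_{p(v)} - \xi_{(p(v),v)}/\lambda + \eta_{(p(v),v)}\bigr)
\]
on the event $\{\xi_{(p(v),v)}/\lambda < L_{p(v)}\}$ that $v$ is damaged, with $L_{x_0} = \tau_{x_0}/\alpha$ and $L_v := 0$ otherwise. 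An induction on depth then shows that, in this coupling, $L_v$ is pointwise decreasing in $\alpha$ and pointwise increasing in $\lambda$. Since a child $w$ of $v$ is damaged iff $\xi_{(v,w)}/\lambda < L_v$, the damaged set shrinks as $\alpha$ grows and expands as $\lambda$ grows sample-pathwise, giving (i) on $\mathcal T, \mathbb N, S_n$ and (ii) on $\mathbb N, S_n$. For the $n$-monotonicity on $S_n$, view $S_{n'}$ as the subgraph of $S_n$ on the root and the first $n'$ leaves; since distinct leaves are non-adjacent, the clocks on the first $n'$ leaves govern identical dynamics in both star graphs, so $X(S_{n'}) \le X(S_n)$ pointwise.

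\textbf{Complete graph $K_n$: (i), (ii), (iii).} Vertex exchangeability reduces the process to a Markov chain on the triple $(R, B, W)$ with $R + B + W = n$. Writing $I = R + B$, the embedded discrete-time walk starts at $(1, 0)$, takes a right step $(I, B) \mapsto (I+1, B)$ with probability $p_{\mathrm{inf}}(I, B) = \lambda(n-I)/[\lambda(n-I) + \alpha + B]$ and an up step $(I, B) \mapsto (I, B+1)$ otherwise, and halts at the diagonal $I = B$ (i.e.\ $R = 0$); the total damage equals $I$ at halting. All three $K_n$ monotonicities reduce to a single comparison lemma: if $p_{\mathrm{inf}}^{(1)}(I, B) \ge p_{\mathrm{inf}}^{(2)}(I, B)$ at every state, then the halting $I$-value stochastically dominates under process $1$. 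Applying the lemma: $p_{\mathrm{inf}}$ is decreasing in $\alpha$ (giving (i)), increasing in $\lambda$ (giving (ii)), and increases when $K_n$ is replaced by $K_{n+1}$ since the numerator becomes $\lambda(n+1-I)$ (giving (iii)). I would prove the lemma by backward induction on the finite state space for the tail function $F^\theta(I, B; k) = \P_\theta(\text{damage} \ge k \mid (I, B))$, using the recursion $F(I, B; k) = p_{\mathrm{inf}} F(I+1, B; k) + (1 - p_{\mathrm{inf}}) F(I, B+1; k)$ off the two boundaries $I = B$ (where $F = \mathbf{1}\{B \ge k\}$) and $I = n$ (where $F = \mathbf{1}\{n \ge k\}$).

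\textbf{Main obstacle.} The binding step is the $K_n$ comparison lemma: a common-uniform coupling of the two walks fails as soon as they separate, because $p_{\mathrm{inf}}$ evaluated at different states may flip order. The backward induction above only closes after one first establishes the auxiliary state-monotonicity $F(I+1, B; k) \ge F(I, B+1; k)$ at every internal state, which formalizes that ``more red, less blue pressure'' leaves more room for further damage. I expect this auxiliary monotonicity to require a nested induction on $W = n - I$ and the tail index $k$, with careful treatment of the two boundary regimes. Once it is in hand, each of (i), (ii), (iii) on $K_n$ follows by reading off the sign of $\partial p_{\mathrm{inf}}/\partial \theta$.
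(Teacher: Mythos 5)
Your tree-recursion has a conceptual error that undermines the whole of the $\mathcal T$, $\mathbb N$, and $S_n$ arguments: the recursion
\[
L_v = \min\!\bigl(\tau_v/\alpha,\ L_{p(v)} - \xi_{(p(v),v)}/\lambda + \eta_{(p(v),v)}\bigr),
\qquad L_{x_0}=\tau_{x_0}/\alpha,
\]
accounts only for $v$ turning blue by its own conversion or by predation from its parent, but on a rooted tree blue also flows \emph{upward}: a descendant of $v$ can turn red, convert, and then chase back up the path to $v$ (and similarly a leaf can chase the root on $S_n$, and a site $k'>k$ can chase leftward on $\mathbb N$). The paper's $T_x$, defined via the $\hat B$-clocks, is exactly this missing contribution, and $\mathbf S_x = (\mathbf R_x + T_x)\wedge(\mathbf S_{\hat x}+\vec B_{\hat x,x})$ rather than just the second term. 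Because your recursion omits it, $L_v$ is not the red lifetime of $v$ in the actual process, and the "damaged set'' it produces is not $X$. This is not a repairable omission: the upward flow is precisely the offsetting effect that makes $\lambda$-monotonicity delicate (faster red reaches more descendants, which then convert and send blue back up faster). Your argument, if it worked, would carry over verbatim to give $\lambda$-monotonicity on arbitrary trees -- which the paper explicitly leaves as a conjecture -- so the ease with which your coupling "proves'' both $\alpha$- and $\lambda$-monotonicity is itself a warning sign that the model is not the one being analyzed.

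The $n$-monotonicity argument on $S_n$ has the same flavor of error. Viewing $S_{n'}$ as a subgraph of $S_n$ and asserting that "the clocks on the first $n'$ leaves govern identical dynamics in both'' is false: an extra leaf $j>n'$ in $S_n$ can become red, convert, and chase the root, killing the root strictly earlier than in $S_{n'}$ and thereby changing which of the first $n'$ leaves ever turn red. The paper's own remark at the end of the $S_n$ section points this out directly -- a naive passage-time coupling does not give $n$- or $\lambda$-monotonicity on stars -- and their fix is to assign conversion-plus-predation times dynamically via a death-process/queue construction, which is genuinely different from a projection coupling.

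For $K_n$ you are taking a different route from the paper, which reduces to a race between a pure death process (whites) and a pure birth process with constant rate-$\alpha$ immigration (blues), following the framework of the cited complete-graph paper; monotonicity is then read off from jump-time couplings. Your alternative -- backward induction on the tail function over the $(I,B)$ grid -- is plausible, and you correctly identify that the binding step is the auxiliary state-monotonicity $F(I+1,B;k)\ge F(I,B+1;k)$, without which a naive common-uniform coupling of the two walks does not close. But you stop short of proving that lemma, so this part of the argument is incomplete. Of the three components, the tree and $S_n$ arguments are wrong as written and the $K_n$ argument is unfinished.
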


We also prove that $\lambda_c(\alpha)$, defined at \eqref{eq:lc}, is non-trivial and pin down its leading order on infinite graphs with bounded degree and non-trivial site percolation threshold. Let $G$ be an infinite graph with root $x_0$. 
Recall that in Bernoulli site percolation, each vertex is open independently with probability $p$ and otherwise closed. The percolation critical value is $$p_c = p_c(G,x_0) := \inf \{ p \colon \P_p(x_0 \in \text{an infinite open cluster}) >0 \}.$$ 
Here a cluster is a maximal component of open vertices connected to one another by at least one edge.
We prove that there is a non-trivial phase transition whenever $p_c$ is non-trivial and, in doing so, obtain the first-order growth of $\lambda_c(\alpha)$. Note that for a non-negative function $f$ we write $f(x) = \Theta(x)$ if there exist constants $c,C>0$ such that $c < f(x)/x < C$ for all large $x$. 
\begin{theorem} \thlabel{thm:pt}
    Fix $\alpha >0$. Suppose that $G$ is an infinite graph with maximum degree $3\leq d < \infty$ and $p_c(G,x_0) <1$. It holds that
    $$\f\alpha{d-2}\leq \lambda_c(\alpha) \leq \frac{d+\alpha}{1 - p_c(G,x_0)^{1/d}} .$$ 
    Thus, %
    $\lambda_c(\alpha) = \Theta(\alpha)$.
\end{theorem}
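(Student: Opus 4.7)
I would treat the two bounds separately.

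\textbf{Lower bound.} Dominate the red cluster on $G$ by a continuous-time Galton--Watson branching process on the $d$-regular tree $T_d$, viewed as the universal cover of $G$. Since chasing only removes red particles, it may be dropped from the analysis: each red vertex spreads at rate $\lambda$ to incident white neighbors and converts to blue at rate $\alpha$. Lifted to $T_d$, a non-root red particle has $d-1$ non-parent directions, and in each direction the rate-$\lambda$ spread clock beats the rate-$\alpha$ conversion clock with probability $\lambda/(\lambda+\alpha)$, yielding offspring count $\Bin(d-1,\lambda/(\lambda+\alpha))$. The mean $(d-1)\lambda/(\lambda+\alpha)$ is at most $1$ exactly when $\lambda \le \alpha/(d-2)$, so standard Galton--Watson theory gives almost-sure extinction for $\lambda < \alpha/(d-2)$, hence $X < \infty$ a.s.\ and $\lambda_c(\alpha) \ge \alpha/(d-2)$.

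\textbf{Upper bound.} I would couple chase-escape with conversion to Bernoulli site percolation on $G$. For each vertex $v$, set $\tilde\tau_v := \min(T_v^c,\, \min_{j \sim v} T_{j,v}^{ch})$, where $T_v^c \sim \Exp(\alpha)$ is $v$'s conversion clock and $T_{j,v}^{ch} \sim \Exp(1)$ is the chase clock from neighbor $j$ into $v$; note $\tilde\tau_v \sim \Exp(\alpha+d)$, and $\tilde\tau_v$ is a deterministic lower bound on the elapsed time from $v$'s first red moment until $v$ turns blue, since only currently-blue neighbors can actually chase $v$. Declare $v$ \emph{good} if $\max_{i \sim v} T_{v,i}^s < \tilde\tau_v$, where $T_{v,i}^s \sim \Exp(\lambda)$ are $v$'s spread clocks. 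If $v$ is good and ever becomes red, then for each neighbor $i$ one has that $i$ is red by time $T_{v,i}^s$ (measured from $v$'s first red moment): either $i$ was already red, or $v$ is still red at this time and spreads to $i$. Because each good event depends only on Poisson clocks incident to $v$, the events $\{v \text{ good}\}$ are independent across $v$. By convexity of $x \mapsto (1-x)^d$ and Jensen's inequality,
\[
\P(v \text{ good}) = \E\bigl[(1 - e^{-\lambda \tilde\tau_v})^d\bigr] \ge \bigl(1 - \E[e^{-\lambda \tilde\tau_v}]\bigr)^d = \left(\frac{\lambda}{\lambda + \alpha + d}\right)^d \ge \left(1 - \frac{d+\alpha}{\lambda}\right)^d.
\]
The good vertices thus dominate Bernoulli site percolation on $G$ at density $(1 - (d+\alpha)/\lambda)^d$. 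When this exceeds $p_c(G, x_0)$---equivalently, $\lambda > (d+\alpha)/(1 - p_c(G, x_0)^{1/d})$---the open cluster of $x_0$ is infinite with positive probability, and a simple induction from $x_0$ (red at time $0$) through the good cluster propagates the red state, giving $X = \infty$ with positive probability. Combined with the lower bound, $\lambda_c(\alpha) = \Theta(\alpha)$, since $d$ and $p_c$ are fixed.

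The key technical choice is the pessimistic blue time $\tilde\tau_v$: it must simultaneously (a) depend only on Poisson clocks local to $v$ so the good events are independent across vertices, (b) lower bound $v$'s actual blue time so the good event forces successful spreading to all neighbors, and (c) have a tractable distribution. Taking $\tilde\tau_v$ as the minimum of $v$'s conversion clock and all $d$ incoming chase clocks satisfies all three, and the Jensen step converts the nonindependence of the $d$ spread-versus-kill races at $v$ into the clean product form $(1-(d+\alpha)/\lambda)^d$.
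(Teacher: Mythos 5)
Both of your bounds arrive at the paper's thresholds, but only the upper bound takes the paper's route. There the argument is essentially identical: you use the same ``good vertex'' event (all $d$ outgoing red clocks beat all $d$ incoming blue clocks and the conversion clock), the same observation that good events depend on disjoint directed-edge clocks and so are independent across vertices, and the same final inequality $\P(v\text{ good}) \ge (\lambda/(\lambda+\alpha+d))^d$. The only cosmetic difference is how you get there: you apply Jensen to $\E\big[(1-e^{-\lambda\tilde\tau_v})^d\big]$, while the paper dominates the maximum of $d$ independent $\Exp(\lambda)$'s by their sum, a Gamma$(d,\lambda)$ variable, and computes $\P(R'_d<B_d)$ exactly; these give the same number. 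One small gap: the paper explicitly notes that a vertex of degree $d'<d$ is only \emph{more} likely to be good (fewer outgoing races to win, $\tilde\tau_v \sim \Exp(\alpha+d')$ stochastically larger than $\Exp(\alpha+d)$); you implicitly assume every vertex has degree $d$ and should state this monotonicity.

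For the lower bound you take a genuinely different route. The paper proves the stronger statement $\E_\lambda[X]<\infty$ by a first-moment/union bound: red surviving along a self-avoiding path of length $k$ has probability at most $(\lambda/(\lambda+\alpha))^k$, there are at most $d(d-1)^{k-1}$ such paths, and the geometric sum converges precisely when $(d-1)\lambda/(\lambda+\alpha)<1$. Your branching-process framing reaches the same threshold via a.s.\ extinction rather than finite expectation, which is also sufficient for $\lambda_c\ge\alpha/(d-2)$. However, there is an error in the distributional claim: the $d-1$ events ``red spreads from $v$ to child $i$ before $v$ converts'' all race against the \emph{same} conversion clock $C_v$, so they are positively correlated and the offspring count is a $C_v$-mixed binomial (conditional on $C_v=c$ it is $\Bin(d-1,1-e^{-\lambda c})$), \emph{not} $\Bin(d-1,\lambda/(\lambda+\alpha))$. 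This does not damage your conclusion, since extinction is governed by the offspring mean $(d-1)\lambda/(\lambda+\alpha)$, which is the same; but as written the claim is false and should be corrected. You should also spell out the coupling that dominates the red cluster on $G$ by the Galton--Watson tree --- a breadth-first exploration in which each newly red vertex has at most $d-1$ unexplored neighbors and, since chasing only hurts red, the unchased process dominates the true one --- rather than appealing to a universal-cover picture, which for a non-regular $G$ is not literally $T_d$.
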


Examples of graphs for which \thref{thm:pt} applies are infinite lattices and regular trees.

\subsection{Further questions}

We conjecture that $X$ is monotone in $\lambda$ on trees and $\mathbb Z^d$.  It would be worthwhile to study chase-escape with conversion on finite random networks whose topologies more closely resemble neuron structures, such as Erd\H{o}s-R\'enyi and random spatial graphs \cite{bernstein2021chase, thamattoor2012simulation}. A central question in chase-escape on $\mathbb Z^2$ is proving that $\lambda_c(0) <1$ \cite{si, cande}, or, even better, that $\lambda_c(0) <1/2$ as conjectured in \cite{kumar2021chase}. With the introduction of conversion, one could seek estimates on $\lambda_c(\alpha)$. For example, does $\lambda_c(\alpha)/\alpha \to C$ for some $C>0$ as $\alpha \to \infty$? Alternatively, it would be interesting to investigate the dual critical value $\alpha_c(\lambda) \coloneqq \inf \{ \alpha \colon \P_{\lambda, \alpha}(X< \infty) =1 \}$. 

In Figure~\ref{fig:crit}, we provide simulation estimates of $\lambda_c(1) \approx 1.975$ and $\alpha_c(1)\approx 0.275$ for the process on $\mathbb Z^2$. The curves in the figure also support our monotonicity on $\mathbb Z^d$ conjecture. See Figure~\ref{fig:clusters} for snapshots of the process on $\mathbb Z^2$. Another interesting variation would allow for an increasing conversion rate $\alpha(t)$ that models increased immune response over time.

A future project is to develop a data set using MRI data consisting of the spatial structure of individual MS lesions. Since MRI data is broken down into cubic millimeter boxes (known as voxels), storing a lesion's structure in $\mathbb Z^3$ would be natural. Such data could be used as benchmarks to compare models against. To our knowledge, none of the previously mentioned MS models have performed such benchmarking. The closest analogue is \cite{moise2021mathematical}, which compared their models to studies that measured the total volume of lesions in participants \cite{lublin2013randomized}, but this was non-spatial data.

\begin{figure}
    \centering
    \includegraphics[width=0.75\linewidth]{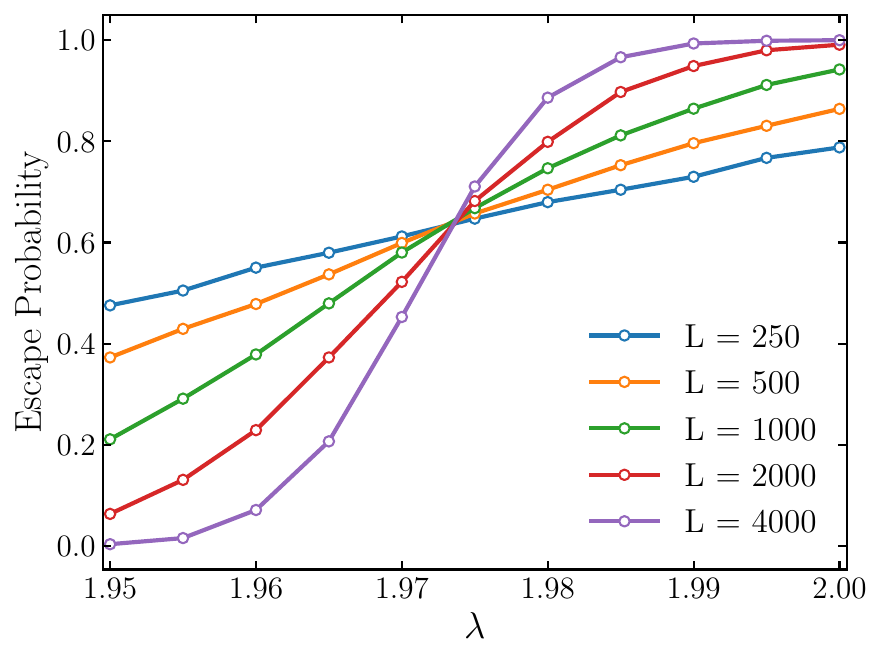}\hfill 
    \includegraphics[width=0.75\linewidth]{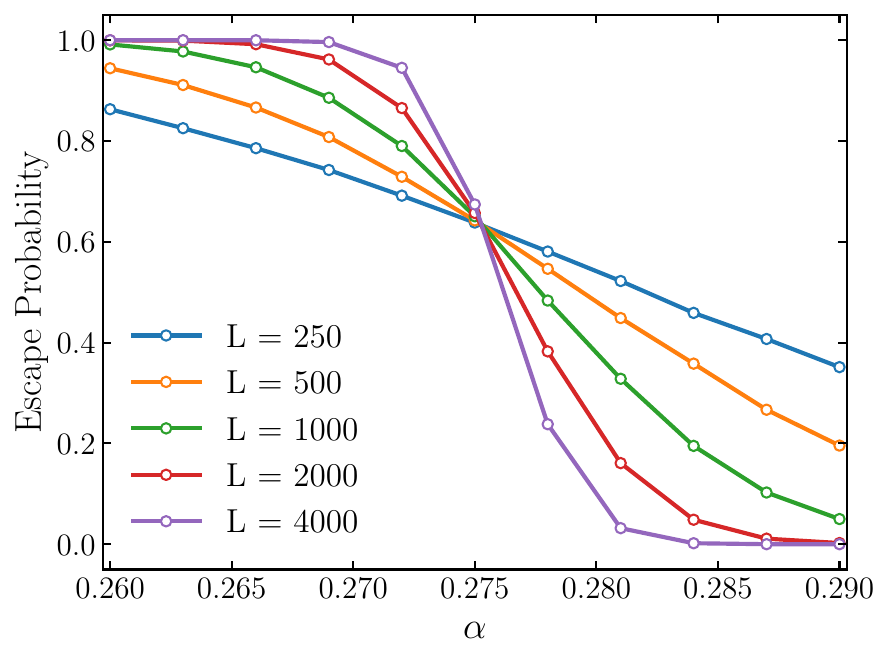}
    \caption{Simulations suggesting $\lambda_c(1) \approx 1.975$ ({top}) and $\alpha_c(1) \approx 0.275$ ({bottom}) for chase-escape with conversion on $\mathbb Z^2$. The process was run on two-dimensional tori with side lengths $L=250, 500, 1000, 2000, 4000$, initializing all vertices on the bottom edge as blue and those vertices one level above as red. The curves interpolate between empirical estimates of the ``Escape Probability", the probability that red reaches the top edge of the torus, averaged over 50,000 samples for each value of $\lambda$ and $\alpha$ indicated by open circles on the curves. The intersection of the curves for the different values of $L$ should be close to the true critical value, as in \cite{kumar2021chase}. Note that we also see what looks like monotonicity of the escape probability in $\lambda$ and $\alpha$.}
    \label{fig:crit}
\end{figure}

\begin{figure}
    \centering

    \includegraphics[width=0.75\linewidth]{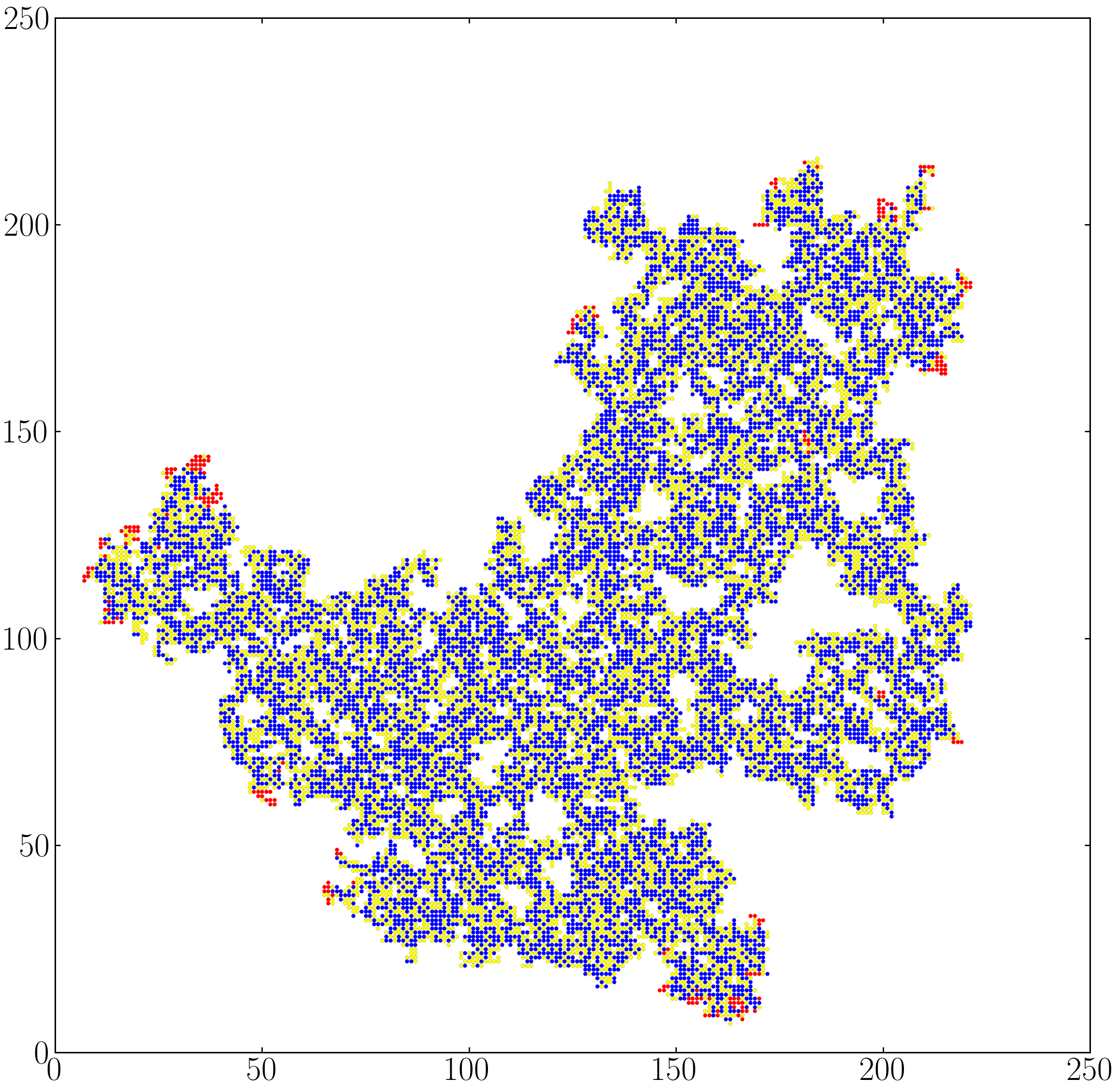}\hfill \includegraphics[width=0.75\linewidth]{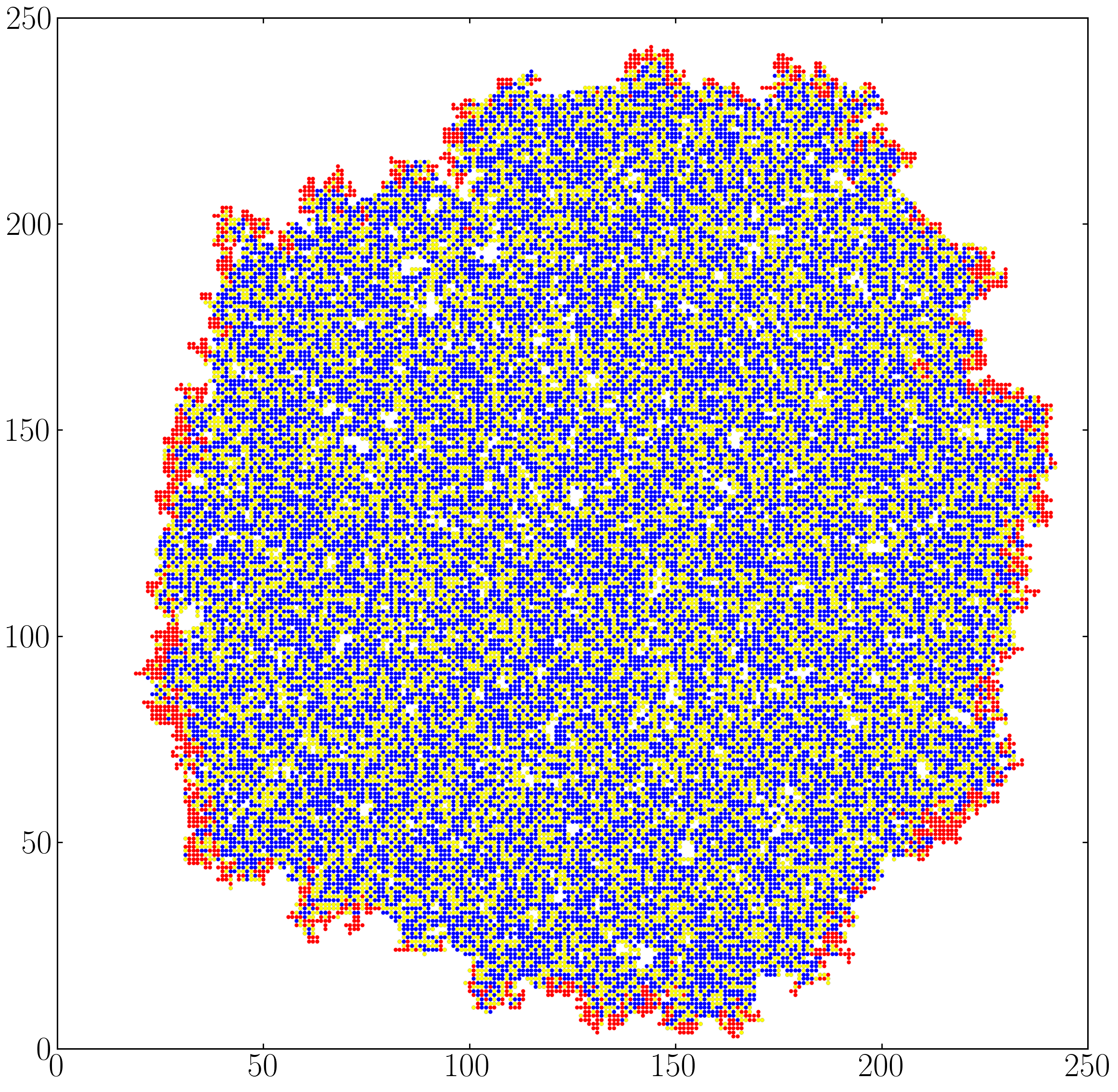}
    \caption{Sample realizations of the (still evolving) damaged region on a $250\times 250$ box 
    started with the central vertex in red and all other vertices in white.
    Sites that turned blue from conversion are colored yellow and those that turned blue from predation are colored blue. We set $\alpha=1$ in both with $\lambda = 1.976$ (top) and $\lambda =3$ (bottom). 
    }
    \label{fig:clusters}
\end{figure}

\subsection{Proof Overview}

\thref{thm:mono} is proven via explicit couplings that allow us to compare how $X$ is sampled for different parameter choices. Seemingly obvious monotonicity can be difficult to prove. The proper framing is needed to obtain \thref{thm:mono} and varies from graph to graph. For example, we find occasion to sample $X$ in a non-Markovian manner using edge passage times (for $\mathcal T$ and part of the proof for $\mathbb N$), in a continuous time Markovian manner (for $K_n$), in a mixture of these two viewpoints for $S_n$, and as a discrete time Markov process (for part of the proof for $\mathbb N$).

For $\mathbb N$, we discretize the process by generalizing a jump chain construction from \cite{beckman2021chase}. Conversion introduces new challenges. For example, the initial height of the jump chain is now random. To couple the starting locations of different jump chains, we use the passage time construction for trees. In the coupling, we must run the monotonically smaller process longer than its counterpart. Once the starting heights are coupled, comparing the evolution for different parameters is also more subtle due to conversion. Maintaining a working coupling requires the introduction of neutral flat steps to the jump chains. 

Monotonicity for the star graph uses a pure death process along with a queue to assign the times for blue to reach the root from a given leaf. 
For $K_n$ we use the framework introduced in \cite{complete} that reduces the dynamics to studying birth and death processes. Conversion introduces constant immigration to the previously pure birth process. The lower bound on $\lambda_c(\alpha)$ in \thref{thm:pt} is proven using a first-moment bound that relies on  red being unlikely to survive along a given path \cite{bernstein2021chase}. The upper bound uses a coupling with site-percolation that was introduced in \cite{gilbert}. 

\subsection{Organization}

We prove \thref{thm:mono} by graph: Section~\ref{sec:tree} handles trees,  Section~\ref{sec:Z} the positive integers, Section~\ref{sec:star} stars, and Section~\ref{sec:Kn} the complete graph. Section~\ref{sec:bounds} proves \thref{thm:pt}.

\section{Proof of \thref{thm:mono}: Trees} \label{sec:tree}

Fix a tree $\mathcal T$ with root $x_0$ as well as a value of $\lambda$. Let $\alpha' \geq \alpha$. We must prove that $X' \coloneqq X(\mathcal T, x_0, \alpha', \lambda) \preceq X(\mathcal T, x_0 ,\alpha, \lambda) =:X$. We will proceed by constructing the chase-escape with conversion process by assigning passage times to directed edges and vertices, then explaining how these times can be used to deduce the value of $X$ i.e., that $X$ is measurable with respect to these passage times. Specifically, we may partition $X$ by 
\begin{align*}
	X = \sum_{n=0}^\infty |\mathcal L(n)| = 1 + \sum_{n=0}^\infty \sum_{x\in \mathcal L(n)} |\mathcal I (x)|
\end{align*}
where $\mathcal L(n)$ denotes the set of vertices ever colored red at level $n$ and $\mathcal I(x)$ denotes the set of children of $x$ ever colored red. Note that given the tree structure, $\mathcal L(n+1) = \bigcup_{x\in \mathcal L(n)} \mathcal I(x)$. We will inductively define $\mathcal L(n)$ and $\mathcal I(x)$ using passage times. It then suffices to show that $\mathcal I'(x) \subset \mathcal I(x)$ for all $x \in \mathcal T$ where $\mathcal I'(x)$ is defined analogously to $\mathcal I(x)$ for the coupled process with parameter $\alpha' \ge \alpha$. 

Throughout the paper, we let $\Exp(\mu)$ denote the exponential distribution with density function $\mu x^{-\mu x}$ for $x \geq 0$. We will use the notation $Y \sim \Exp(\mu)$ to denote that $Y$ is a random variable with this distribution.  We proceed in two steps: first, describing the setup and then proving monotonicity.

\subsection{Setup}

Call $\hat y \in \mathcal T$ the \emph{parent} of $y$ if $\hat y$ shares an edge with $y$ and lies on the geodesic path of edges connecting $y$ to the root $x_0$. In this case, call $y$ a \emph{child} of $\hat y$. The \emph{progeny} of $y$ consists of all children of $y$, their children’s children, and so on.
Given $x \in \mathcal T$, define the subtree $\mathcal T(x)$ that includes vertices whose geodesic path to $x$ excludes $\hat x$, i.e., $\mathcal T(x)$ includes $x$ and all generations of its descendants. For each vertex $y \in \mathcal T(x)$, let $(x=x_1,x_2,\hdots, x_n= y)$ be the geodesic path of vertices between $x$ and $y$. This path is simply $(x)$ when $x=y$ and $(x,y)$ when $x$ and $y$ are adjacent.

To each edge $\mathcal E_{\hat y, y}$ we assign $(\vec R_{\hat y, y}, \vec B_{\hat y, y},\hat B_{\hat y, y}, C_y)$ consisting of four independent exponentially distributed random variables, 
which are also independent from those assigned to other edges. The values $\vec R_{\hat y, y} \sim \Exp(\lambda)$ and $\vec B_{\hat y, y} \sim \Exp(1)$ give the time it takes for red and blue, respectively, to spread from $\hat y$ to $y$. The value $\hat B_{\hat y, y} \sim \Exp(1)$ represents the time it takes for blue to spread from $y$ to $\hat y$. Note that the tree structure ensures that red cannot spread from $y$ to $\hat y$. Lastly,  $C_y \sim \Exp(\alpha)$ represents the conversion time at $y$. See Figure~\ref{fig:edges} for visual representation of these passage times as directed edges.


\begin{figure}
\begin{tikzpicture}[>=Stealth]
    \foreach \x in {0,1,2,3} {
        \node (\x) at (\x*2,0) {\x};
    }
    \node (4) at (4,-2) {4};
    \node (5) at (6,-2) {5};

    \draw[->,red] (0) -- node[midway, sloped, fill=white, inner sep=1pt] {\(\vec R_{0,1}\)} (1);
    \draw[->,blue,bend left=30] (0) to node[above] {\(\vec B_{0,1}\)} (1);
    \draw[->,blue,bend left] (1) to node[below] {\(\hat B_{1,0}\)} (0);

    \foreach \x [remember=\x as \lastx (initially 1)] in {2,3} {
        \draw[->,red] (\lastx) -- (\x);
        \draw[->,blue,bend left] (\lastx) to (\x);
        \draw[->,blue,bend left] (\x) to (\lastx);
    }

    \draw[->,red] (2) -- (4);
    \draw[->,red] (4) -- (5);
    \draw[->,blue,bend left] (4) to (5);
    \draw[->,blue,bend left] (5) to (4);
    \draw[->,blue,bend left] (2) to (4);
    \draw[->,blue,bend left] (4) to (2);

    \draw[->,purple] (0) to [out=130,in=50,loop] node[above] {\(C_0\)} ();
    \foreach \x in {1,2,3} {
        \draw[->,purple] (\x) to [out=130,in=50,loop] ();
    }
    \draw[->,purple] (4) to [out=310,in=230,loop] ();
    \draw[->,purple] (5) to [out=310,in=230,loop] ();
\end{tikzpicture}

\caption{Directed edges representing the passage times $\vec R_{\hat y, y}, \vec B_{\hat y, y}, \hat B_{y,\hat y}$, and $ C_y$. These are labeled for $\hat y = 0$ and are assigned similarly at the other vertices. On a tree, $X$ is measurable with respect to these passage times. 
}\label{fig:edges}
\end{figure}

Given a vertex $x$ and $y \in \mathcal{T} (x)$, define the infection times
\begin{equation}\label{eq:xyinfectiontime}
    T_{x,y} := \left(\sum_{i=1}^{n-1} \vec R_{x_i,x_{i+1}}\right) + C_y + \left(\sum_{i=1}^{n-1} \hat B_{x_{n-i+1},x_{n-i}}\right).
\end{equation}
The times $T_{x,y}$ give how long it takes for \emph{$x$ to be infected from the conversion of $y$}: red spreads from $x$ to $y$, the vertex $y$ converts to blue, and then the sites along the geodesic from $y$ to $x$ turn blue by predation until ultimately predating $x$ so it becomes blue.
For each vertex $x$, this sequence of events can occur at most once, for some $y$ in the progeny $\mathcal{T} (x)$ (note that $T_{x,y}$ considers only the conversion of vertex $y$). Accordingly, we define:
\[ 
    T_x = \min \{ T_{x,y} \colon y \in \mathcal T(x)\}.
\]

Let $\mathbf S_x$ denote the \emph{survival time of $x$}, that is, the time at which $x$ first turns blue. We use the conventions that bold letters are in the units of global time as the process runs and that $\mathbf S_x$ is infinite if $x$ never turns blue. Letting $(x_0, x_1,x_2,\hdots, x_n=x)$ be the geodesic path of vertices from $x_0$ to $x$, define 
\begin{align*}
	\mathbf R_x := \vec R_{x_0,x_1} + \cdots + \vec R_{x_{n-1}, x_n}
\end{align*}
to be the time for red to reach $x$ from $x_0$. 
We say that a vertex $x\in \mathcal T$ is of \emph{generation} $n$ if the geodesic path from $x_0$ to $x$ contains $n+1$ vertices.

For generation $0$, we have $\mathcal L(0) = \{x_0\}$. The root $x_0$ can only turn blue because one of its descendants (including itself) converted to blue and spread to $x_0$. Thus, we have
\begin{align*}
	\mathbf S_{x_0} := T_{x_0}.
\end{align*}
Letting $\mathcal{C}(x)$ denote the child vertices connected to the vertex $x$,  the children of the root $x_0$ that are ever in state red are exactly those who turned red before $x_0$ turns blue, i.e.,
\begin{align}
\mathcal I(x_0) \coloneqq \{x \in \mathcal C (x_0) \colon \vec R_{x_0, x} \leq \mathbf S_{x_0} \}. \label{eq:root}
\end{align}

Given $\mathcal L(0) , \dots, \mathcal L(n)$, $\mathbf S_x$ and $\mathcal I(x)$ for each $x\in \bigcup_{k\le n-1} \mathcal L(k)$, let us describe how to determine $\mathbf S_x$ and $\mathcal I(x)$ for each $x\in \mathcal L(n)$ and thus $\mathcal L(n+1)$. 

Fix $x \in \mathcal L(n)$ and consider its parent $\hat{x} \in \mathcal L(n-1)$. Note that $x$ turns blue either by predation from $\hat x$ or by infection from the conversion of one of its descendants (including itself). Thus, the survival time of $x$ is given by
\begin{align*}
	\mathbf S_x = (\mathbf R_x + T_x )\wedge (\mathbf S_{\hat x} + \vec B_{\hat x, x}).
\end{align*}

Thus, the set of children that $x$ converts to red is given by 
\begin{align}
\mathcal I(x) := \{ y \in \mathcal C(x) \colon \mathbf R_{y} \leq \mathbf  S_{x} \} \label{eq:I}
\end{align}
and
\begin{align*}
	\mathcal L(n+1) = \bigcup_{x\in \mathcal L(n)} \mathcal I(x).
\end{align*}

We obtain a partition $\{x_0\} \cup \bigcup_{n=0}^\infty \bigcup_{x \in \mathcal L(n)} \mathcal I(x)$ of vertices that are ever colored red. Using the memoryless property of the exponential distribution, this yields the following characterization of $X$
\begin{align*}
	X = 1+ \sum_{n=0}^\infty \sum_{x \in \mathcal L(n)} |\mathcal I(x)|.
\end{align*}

\subsection{Proof of monotonicity}  Fix $\alpha' \geq \alpha$, and define the analogous spreading times with the following couplings: $\vec R_{\hat y, y}'=\vec R_{\hat y,y}, \vec B_{\hat y, y}' = \vec B_{\hat y, y}, \hat B_{y,\hat y}' = \hat B_{y, \hat y}$, and  $C_y' \leq C_y$ with $C_y' \sim \Exp(\alpha')$. Using these coupled passage times and defining $\mathcal I'(x)$ and $\mathcal L'(n)$ analogously, we may sample $X'$ by taking 
\begin{align*}
	X' = 1 + \sum_{n=0}^\infty \sum_{x \in \mathcal L'(n)} |\mathcal I'(x)|.
\end{align*}
Using the convention that $\mathcal I(x) = \emptyset$ if $x\notin \bigcup_n \mathcal L(n)$, it suffices to show that $\mathcal I'(x) \subseteq \mathcal I(x)$ for all $x \in \mathcal T$. {We will do this by induction. For the base case, we show that $\mathcal{I}'(x_0) \subseteq \mathcal{I}(x_0)$. Recall that the red and blue passage times are coupled to be equal. Hence, the first and third terms in \eqref{eq:xyinfectiontime} are the same in both systems: 
\begin{align*}
\sum_{i=1}^{n-1} \vec R_{x_i,x_{i+1}} &= \sum_{i=1}^{n-1} \vec R'_{x_i,x_{i+1}}, \\
\sum_{i=1}^{n-1} \hat B_{x_{n-i+1},x_{n-i}} &= \sum_{i=1}^{n-1} \hat B'_{x_{n-i+1},x_{n-i}}.
\end{align*}
$C_y' \leq C_y$ implies $T'_{x,y} \leq T_{x,y}$ for all $y \in \mathcal{T}(x)$ and therefore 
$T_{x}' \leq T_{x}$ for all $x \in \mathcal{T}$. Thus, $\bm{S}_{x_0}' \leq \bm{S}_{x_0}$. Note that $\bm{R}_x = \bm{R}'_x$ for all $x \in \mathcal{T}$, so this ordering of the survival times imposes a more stringent condition at \eqref{eq:root} for inclusion in $\mathcal I'(x_0)$ than in $\mathcal I(x_0)$. Thus, $\mathcal I'(x_0) \subseteq \mathcal I(x_0)$.

Suppose we have $\mathcal I'(v) \subset \mathcal I(v)$ for every $v\in \mathcal T$ of generation $0$ to $n$. For any $x\in \mathcal T$ of generation $n+1$, its parent $\hat x$ is of generation $n$. The inclusion $\mathcal I'(x) \subset \mathcal I(x)$ trivially holds in the following scenarios:
\begin{itemize}
	\item $x \notin \mathcal I(\hat x)$: The induction hypothesis implies $x\notin \mathcal I'(\hat x)$ so that $\mathcal I(x) = \mathcal I'(x) = \emptyset$.
	\item $x \notin \mathcal I'(\hat x)$: Then $\mathcal I'(x) = \emptyset \subset \mathcal I(x)$.
\end{itemize}
Thus it remains to consider when $x \in \mathcal{I}(\hat x)\cap \mathcal{I}'(\hat x)$. In this case, both $\mathcal{I}(\hat x)$ and $\mathcal{I}'(\hat x)$ are nonempty and so $\bm S_{\hat x}, \bm{S}'_{\hat x}< \infty$ almost surely. Recall that
\begin{align*}
	\bm{S}_x &= (\bm{R}_x + T_x) \wedge (\bm{S}_{\hat x} + \vec B_{\hat x, x}),\\
	\bm{S}'_x &= (\bm{R}'_x + T'_x) \wedge (\bm{S}'_{\hat x} + \vec B'_{\hat x, x}).
\end{align*}
Since we have the equalities $\bm R_x = \bm R_x'$, $\vec B_{\hat x, x} = \vec B'_{\hat x, x}$, and the inequalities $T'_x \leq T_x$, $\bm S'_{\hat x} \leq \bm S_{\hat x}$, we can see that $\bm S'_{x} \leq \bm S_{x}$. Therefore, the condition in~\eqref{eq:I} is more stringent with $\bm S'_x$ than with $\bm S_x$ and so $\mathcal{I}'(x) \subseteq \mathcal{I}(x)$ as desired.

This proves that $\mathcal{I}'(x) \subseteq \mathcal{I}(x)$ for all $x \in \mathcal{T}$ and therefore that $X' \preceq X$ as desired. }
\begin{remark}
    We have proven the stronger statement that the set of vertices $\{x_0\} \cup \bigcup_{x \in \mathcal T} \mathcal I(x)$ ever colored red is stochastically decreasing in $\alpha$. 
\end{remark}
\begin{remark}
    This construction of $X$ highlights the difficulty in proving monotonicity in $\lambda$. Increasing $\lambda$, reduces the $\vec R_{x,y}$ passage times, and as a result, reduces both $\mathbf R_x$ and $\bm{S}_x$. Thus, both sides are reduced in the comparison in \eqref{eq:I}. 
\end{remark}

\begin{remark}
    This argument does not generalize to graphs with cycles because the comparisons needed to determine which vertices are ever colored red are more complicated.
\end{remark}

\section{Proof of \thref{thm:mono}: The positive integers} \label{sec:Z}

As observed in \cite{beckman2021chase}, a useful quantity for analyzing variants of the chase-escape process on the positive integers $\mathbb N$ is the \emph{jump chain} $(Y_t)_{t \in \mathbb N}$.  This discrete-time process tracks the number of (necessarily contiguous) red sites in front of the rightmost blue. For example, the jump chain corresponding to this configuration of red and blue on $\mathbb N$
\quad 
\\ 

\begin{center}

  \begin{tikzpicture}[scale = .5]
            \node[circle,fill=red, draw = black, inner sep = .1 cm] at (0, 0) {};
            \foreach \n in {1,...,8}{
                \node[circle,fill=red, draw = black, inner sep = .1 cm] at (\n, 0) {};
                \draw (\n -.75,0) -- (\n-.25,0);
            }    
            \foreach \n in {9,...,10}{
                \node[circle,fill=white, draw = black, inner sep = .1 cm] at (\n, 0) {};
                \draw (\n -.75,0) -- (\n-.25,0);
            }    
            \node[circle,fill=blue, draw = black] at (4,0) {};
            \node[circle,fill=blue, draw = black] at (5,0) {};
            \node[circle,fill=blue, draw = black] at (1,0) {};
            \node[circle,fill=blue, draw = black] at (2,0) {};
            \draw[|-|] (6,-.75) -- (8,-.75);
            \node at (7,-1.25) {$Y_t$};
            
        \end{tikzpicture}

\end{center}
would have height 3. If, at the next transition, the configuration changed to\quad 
\\ 

\begin{center}

  \begin{tikzpicture}[scale = .5]
            \node[circle,fill=red, draw = black, inner sep = .1 cm] at (0, 0) {};
            \foreach \n in {1,...,8}{
                \node[circle,fill=red, draw = black, inner sep = .1 cm] at (\n, 0) {};
                \draw (\n -.75,0) -- (\n-.25,0);
            }    
            \foreach \n in {9,...,10}{
                \node[circle,fill=white, draw = black, inner sep = .1 cm] at (\n, 0) {};
                \draw (\n -.75,0) -- (\n-.25,0);
            }    
            \node[circle,fill=blue, draw = black] at (4,0) {};
            \node[circle,fill=blue, draw = black] at (5,0) {};
            \node[circle,fill=blue, draw = black] at (7,0) {};
            \node[circle,fill=blue, draw = black] at (1,0) {};
            \node[circle,fill=blue, draw = black] at (2,0) {};
            \draw[|-|] (8,-.75) -- (8,-.75);
            \node at (8,-1.25) {$Y_{t+1}$};
            
        \end{tikzpicture}

\end{center}
the jump chain would then have height $1$. 
We emphasize that the values $t=0,1,\hdots$ are discrete and correspond to the (random) times at which the configuration changes. See \cite{beckman2021chase} for a more formal description in the case of chase-escape.

For our purposes, an \emph{active jump chain} is then described by a lattice path that:
\begin{itemize}
    \item Starts at $(0,Y_0)$ for some $Y_0 \geq 0$. 
    \item Consists of $+(1,1)$ up-steps and $+(1,-j)$ down-steps for any $j\geq 1$.

    \item Is never negative and stops upon reaching 0. 
\end{itemize}

If the jump chain is at $(t,k)$, then the next step is to $(t+1,j)$, for $j \in \{0,1,\hdots,k-1, k+1\}$, with probabilities $p_{k,j}$ where
\[
    p_{k,k+1} = \f{ \lambda }{1 + \lambda + \alpha k}, \qquad  p_{k,k-1}= \f{ 1 + \alpha}{1+\lambda+\alpha k},
\]
and 
\[ 
    p_{k,j} = \f{\alpha}{1+\lambda + \alpha k} \qquad 0 \leq j \leq k-2. 
\]
The first term, $p_{k,k+1}$, is the probability that the rightmost red advances. The second term, $p_{k,k-1}$, is the probability that either the rightmost blue advances or the leftmost red (in the relevant connected component) converts to blue. 
The third term, $p_{k,j}$, is the probability that the red at distance $k-j$ from the rightmost red converts to blue.
Reaching 0 corresponds to the rightmost red becoming blue, thus ending the possibility of red advancement.

The jump chain evolves until the stopping time $\kappa = \inf\{t \colon Y_t = 0 \}$ with the convention that $\kappa = 0$ whenever $Y_0 =0$. Let
\begin{align*}
	U(n) = \# \{0 < t \leq \kappa \colon Y_{t} = Y_{t-1}+1 \mid Y_0 = n\}
\end{align*}
be the number of up-steps by the jump chain $(Y_t)_{t \in \mathbb{N}}$ starting with $Y_0 = n$.  

One can start tracking the jump chain at any point from the appearance of the first blue on. Let $N_t$ be the site of the rightmost red at time $t$ and $M_t$ be the site of the rightmost blue at time $t$ with the convention that $N_t = M_t$ if the rightmost non-white site is blue, the jump chain construction yields the following characterization of $X$ on the positive integers $\mathbb{N}$, rooted at $x_0 = 1$.

\begin{lemma} \thlabel{lem:X} For any time $t$ when blue is present in the system, $X(\mathbb N, x_0, \alpha, \lambda) \overset{d}{=} N_t + U(N_t-M_t)$.
\end{lemma}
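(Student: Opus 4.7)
The plan is to interpret $X$ as the total number of sites ever colored red and then recover this quantity from the jump chain. The key observation is that on $\mathbb N$, red only spreads to the right, so the set of vertices ever colored red is exactly $\{1, 2, \ldots, N_\infty\}$ with $N_\infty := \sup_t N_t$. Moreover, every vertex that is ever red eventually turns blue, either by conversion at rate $\alpha$ or by predation (and in the non-fixating case both $X$ and $N_\infty$ are infinite), so $X = N_\infty$ almost surely.

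Next, I would show that the increment $N_\infty - N_t$ equals the number of up-steps that the jump chain makes from time $t$ until it first hits $0$. Only the red-advancement transition (rate $\lambda$) ever colors a new vertex red, and this transition corresponds exactly to a $+1$ up-step of the chain; predation and conversion never create a new red site. Once the jump chain hits $0$, every remaining red vertex sits strictly to the left of the rightmost blue and therefore has no white neighbor, so no further rightward advancement is possible. This yields the almost-sure identity $N_\infty = N_t + U(N_t - M_t)$, and hence $X = N_t + U(N_t - M_t)$.

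The equality in distribution then follows from the strong Markov property and the memorylessness of the exponential passage times. Given the configuration at time $t$, the continuation of the process is Markov and the evolution of the rightmost contiguous red block is itself a Markov chain with the stated transition probabilities $p_{k,j}$: the relevant events (rightmost red advancing to a fresh white, leftmost chain red being predated, chain reds converting) depend only on the height $k$ of the block and are independent of everything happening behind the rightmost blue, since any isolated reds there have neither white neighbors nor red neighbors that affect the chain. Consequently the number of up-steps starting from time $t$ has exactly the distribution of $U(N_t - M_t)$.

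I expect the main obstacle to be formalizing the decoupling of the rightmost red block from the rest of the configuration cleanly enough that the transitions $p_{k,j}$ are genuinely those of the Markov chain specified before the lemma. In particular, one must check that conversions of isolated reds and any other activity behind the rightmost blue do not interfere with the rates governing the tracked block, and that the rate \textbf{exactly} $\alpha$ accrues to each of the $k$ chain reds (including the leftmost, whose conversion is indistinguishable from predation at the level of the jump chain, explaining the combined factor $1 + \alpha$ in $p_{k,k-1}$). Once this bookkeeping is in place, combining the three steps above gives the claim.
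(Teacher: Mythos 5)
Your argument is correct and follows the same route the paper implicitly takes (the paper states \thref{lem:X} as a direct consequence of the jump chain construction without giving a separate proof): identify $X$ with $N_\infty$, observe that $N$ increments precisely at up-steps and freezes once $Y$ hits $0$, and use the memoryless/Markov property to conclude that the future up-step count given the configuration at time $t$ is distributed as $U(N_t-M_t)$. The decoupling concern you flag is handled exactly as you suspect: the non-white region $\{1,\dots,N_t\}$ is always an interval, so activity strictly left of $M_t$ involves no white sites and runs on clocks disjoint from those driving the block $\{M_t,\dots,N_t\}$.
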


In particular, let $\gamma$ be the time of the first conversion from red to blue and denote $N := N_\gamma$ and $M := M_\gamma$, that is, $M$ is the first site to convert to blue. \thref{lem:X} implies that $X(\mathbb N, x_0, \alpha, \lambda) \overset{d}{=} N + U(N-M)$.

We will prove that $X$ is monotone in $\alpha$ and $\lambda$ simultaneously by coupling the jump chains corresponding to parameter choices $\alpha \leq \alpha'$ and $\lambda \geq \lambda'$. Let $X' = X'(\mathbb N, x_0, \alpha', \lambda')$ be sampled from the jump chain $(Y_t')_{t \in \mathbb N}$ with parameters $\lambda'$ and $\alpha'$. Define $N'_t$, $M'_t$, and $U'$ analogously for the $(\lambda', \alpha')$-chase-escape with conversion process. 
We will prove the following characterization of $X'$ that, in light of \thref{lem:X}, implies that $X' \preceq X$.

\begin{lemma}\thlabel{lem:X'}
    There exists $\gamma'$ such that $X'(\mathbb N, x_0, \alpha', \lambda')  \overset{d}{=} N'_{\gamma'} + U'(N'_{\gamma'}-M'_{\gamma'})$ with the relations:
    \begin{enumerate}[label = (\roman*)]
        \item $N'_{\gamma'} \preceq N$ 
        \item $N'_{\gamma'}- M'_{\gamma'} \preceq N- M$
        \item $U'(n') \preceq U(n)$ for all $n' \leq n$.
    \end{enumerate}
\end{lemma}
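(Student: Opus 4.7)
The plan is to prove the three stochastic inequalities via separate but compatible couplings, then combine them through the decomposition in \thref{lem:X} to conclude $X' \preceq X$. I would tackle (iii) first as it admits the cleanest coupling, then return to (i) and (ii).

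For (iii), I would realize both jump chains as continuous-time Markov chains on a single probability space driven by a coordinated family of Poisson processes: (a) a rate-$\lambda$ process for unprimed up-steps, thinned by independent $\mathrm{Ber}(\lambda'/\lambda)$ flips to produce the rate-$\lambda'$ primed up-steps so that every primed up-step coincides with an unprimed one; (b) a shared rate-$1$ process driving the rightmost-blue down-by-one event in both chains; (c) for each index $i \ge 1$, an independent rate-$\alpha$ process $\mathcal U_i$ whose firing triggers conversion of the $i$-th-from-the-right red in \emph{both} chains simultaneously (when such a red exists), together with an independent rate-$(\alpha'-\alpha)$ process $\mathcal V_i$ that fires only the primed $i$-th-from-the-right conversion. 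A short case analysis shows that this coupling maintains $Y_t \ge Y_t'$: whenever $\mathcal U_i$ fires with $i \le k' \le k$, both heights collapse to $i-1$; whenever $k' < i \le k$, only the unprimed drops, to $i-1 \ge k'$; and $\mathcal V_i$ firings only decrease $Y'$. Hence the primed chain is absorbed at $0$ no later than the unprimed chain, and since every primed up-event is an unprimed up-event, $U'(n') \le U(n)$ almost surely for every $n' \le n$.

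For (i) and (ii), I would use the passage-time construction of Section~\ref{sec:tree} applied to $\mathbb N$. Couple the passage times pointwise via iid $E_i, F_i \sim \Exp(1)$ by setting $\vec R_i = E_i/\lambda \le E_i/\lambda' = \vec R_i'$ and $C_i' = F_i/\alpha' \le F_i/\alpha = C_i$, with shared $\vec B_i = \vec B_i'$, so that the orderings $\lambda \ge \lambda'$ and $\alpha' \ge \alpha$ are realized pointwise. Let $\gamma$ denote the first conversion time in the unprimed process. Following the proof overview's instruction to ``run the monotonically smaller process longer than its counterpart'', I would take $\gamma'$ to be a carefully chosen stopping time of the primed process---for example, the first primed conversion time at which both $N'_{\gamma'} \le N$ and $N'_{\gamma'} - M'_{\gamma'} \le N - M$ hold---and argue that this stopping time is almost surely finite and has the marginal law needed for \thref{lem:X} to apply at $\gamma'$, giving $X' \overset{d}{=} N'_{\gamma'} + U'(N'_{\gamma'} - M'_{\gamma'})$.

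Combining Steps 1 and 2 on the joint probability space yields $X' = N'_{\gamma'} + U'(N'_{\gamma'} - M'_{\gamma'}) \le N + U(N - M) = X$ pointwise, proving $X' \preceq X$. The main obstacle I expect is the construction and analysis of $\gamma'$ in Step 2: the coupling of (iii) is essentially structural once one finds the right per-index organization of conversion clocks, but the initial-configuration comparison in (i) and (ii) must reconcile the opposing effects of slower red spread and faster conversion in the primed process, and the naive choice $\gamma' = \tau'$ (first primed conversion) does not obviously dominate. The ``neutral flat step'' alternative hinted at in the proof overview would instead embed both chains on a common discrete clock with possible no-op transitions, which could allow the joint-configuration argument at $\gamma'$ to be reformulated in synchronous discrete time and may be the route the authors take.
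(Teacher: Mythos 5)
Your Step for (iii) is essentially the same as the paper's: the paper samples $\tau_R \sim \Exp(\lambda')$ together with $\sigma_R \sim \Exp(\lambda - \lambda')$ for up-steps and, per index $i$, $\tau_i \sim \Exp(\alpha)$ together with $\sigma_i \sim \Exp(\alpha'-\alpha)$ for conversions, which is exactly your superposition/thinning coupling; their ``flat steps'' are what you get when you record the continuous-time coupling at the union of jump times and one chain does not move. Your case analysis for preservation of $Y_t \geq Y'_t$ matches theirs.

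For (i) and (ii) there is a genuine gap. The entire content of these two claims is the construction of $\gamma'$, and your candidate definition---``the first primed conversion time at which both $N'_{\gamma'} \leq N$ and $N'_{\gamma'} - M'_{\gamma'} \leq N - M$ hold''---is circular: it presupposes both that such a time exists and that the inequalities hold there, leaving nothing to prove and no handle for verifying that \thref{lem:X} applies. You correctly observe that the naive choice (first primed conversion) fails, but you do not replace it. The paper's construction is: let $M$ be the site of the first conversion \emph{in the unprimed process}, and set $\gamma' := \bm R'_M + C'_M$, the time at which site $M$ would become red and then convert using the primed passage times. This is concrete and nonanticipating of the primed process's evolution, and blue is always present at $\gamma'$ in the primed system. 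One then splits into two cases, depending on whether $M$ is white or blue at time $\gamma'$ in the primed process (red is excluded since $\gamma'$ is precisely the moment $M$ would stop being red). In the white case red was already stopped strictly to the left of $M$, so $N'_{\gamma'} = M'_{\gamma'} < M \leq N$ and both bounds are immediate. In the blue case $M'_{\gamma'} \geq M$ automatically, and one shows $\gamma' < \bm R'_{N+1}$ by chaining the definition of $N$ (which forces $C_M < \sum_{i=0}^{N-M} \vec R_{M+i,M+i+1}$) with the pointwise couplings $\vec R' \geq \vec R$ and $C' \leq C$; this gives $N'_{\gamma'} \leq N$ and hence (ii). That case split and that chain of inequalities are the crux of (i)--(ii) and are absent from your proposal.
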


\begin{proof}
    We will proceed in two steps. First, we will produce a coupling that uses the passage time construction for trees from the previous section to prove claims (i) and (ii). The basic idea is that we take $\gamma'$ to be the time the site $M$ becomes blue in the $(\lambda', \alpha')$-process. %

    In Step 2, we will proceed inductively to prove that, conditional on $Y_t' \leq Y_t$, the next step of the chain results in $Y_{t+1}' \leq Y_{t+1}$. This implies that $U'(n') \preceq U(n)$. The second step requires a {minor} time distortion where sometimes only one of the jump chains takes an up- or down-step while the other takes a \emph{flat-step}. However, this does not alter the total number of up-steps taken.

    \textbf{Step 1: (i) and (ii).} First we will formally sample $M$ and $N$ used to determine $X$ in \thref{lem:X}. As $\mathbb N$ is a tree graph, the passage time construction from Section~\ref{sec:tree} applies. In particular, for $y \geq 1$ we sample times $\vec R_{y, y+1}, \vec B_{y ,y+1},  \hat{B}_{y+1, y}$, and $C_y$. 
    Let $\mathbf R_1 = 0$ and, for $x \geq 2$, define $\mathbf R_x = \mathbf R_{x-1} + \vec R_{x-1,x}$ to be the time it takes for red to reach site $x$ in the absence of chase and conversion.  Note that $\mathbf R_x+ C_x$ is the time it takes for site $x$ to become red, and then convert. Define $\gamma = \min \{ \mathbf R_x + C_x \colon x \in \mathbb N \}$ as the first conversion time. This minimum is realized at some almost surely unique site $M$. We set $N = \max \{ x \colon T_x \leq  \gamma \}$ to be the location of the rightmost red at time $\gamma$ with the convention that $N=M$ if $M$ is the rightmost non-white site. 

    For $\alpha' \geq \alpha$ and $\lambda' \leq \lambda$, sample the edges $\vec R_{y, y+1}', \vec B_{y ,y+1}',  B_{y+1, y}'$, and $C_y'$ so that the blue spreading times are the same and $\vec R_{y, y+1}'\geq \vec R_{y, y+1}$ and $C_y' \leq C_y$. Define $\mathbf R_x'$ analogously using these passage times. Taking $M$ exactly as in the previous paragraph (using the $\lambda$ and $\alpha$ passage times), let $\gamma' = \mathbf R_M' + C_M'$ be the time at which site $M$ would convert to blue using the $\lambda'$ and $\alpha'$ passage times. 
    We let $M'_{\gamma'}$ be the site with the rightmost blue at time $\gamma'$ in the $(\lambda', \alpha')$-process. 
    {Note that $M'_{\gamma'}$ need not equal $M$ nor the first site to turn blue in the $(\lambda', \alpha')$-process}; it is possible that other sites were converted earlier, and even that some sites became blue from predation. We let $N'_{\gamma'}$ be the location of the rightmost red at time $\gamma'$ in the $(\lambda', \alpha')$-process with the convention that $N'_{\gamma'}=M'_{\gamma'}$ if $M'_{\gamma'}$ is the rightmost non-white site. See~Figure \ref{fig:Y0} for a visual representation of these quantities.

To derive the relationship between these variables, we will first deal with the case where site $M$ is white at time $\gamma'$ in the $(\lambda', \alpha')$-process, followed by the case when  $M$ is blue at time $\gamma'$ in the $(\lambda', \alpha')$-process. The two cases are represented in the right and left panels, respectively, of Figure~\ref{fig:Y0}. These are the only cases, since $M$ cannot be red at time $\gamma'$, given that $\gamma'$ is the time that site $M$ would convert from red to blue. 

If $M$ is white at time $\gamma'$, then site $M$ remains white for all time in the process with the $(\lambda', \alpha')$ passage times, and a blue site strictly to the left of $M$ is the rightmost non-white site. 
Therefore, $M'_{\gamma'} = N'_{\gamma'} < M \leq N$. Therefore, $N'_{\gamma'} \leq N$ and $0 = N'_{\gamma'}-M'_{\gamma'} \leq N-M$ in this case. 

Assume now that site $M$ is blue at time $\gamma'$. In this case, $M'_{\gamma'} \geq M$, because $M'_{\gamma'}$ is the rightmost blue site and site $M$ is blue. Since $N$ is the location of the rightmost red site in the $(\lambda, \alpha)$-process, it follows that
\[
    C_M < \sum_{i=0}^{N-M} \vec R_{M+i,M+i+1},
\]
that is, the time that it takes red to spread from site $M$ to site $N+1$ (at rate $\lambda$) is greater than the time it takes site $M$ to convert to blue, at rate $\alpha$. This, combined with the passage time relations $\vec R_{y, y+1}'\geq \vec R_{y, y+1}$ and $C_y' \leq C_y$, implies
\begin{align*}
    \gamma' & = \bm R'_M + C'_M\\
            &\leq \bm R'_M + C_M\\
            &< \bm R'_M + \sum_{i=0}^{N-M} \vec R_{M+i,M+i+1}\\
            &\leq \bm R'_M + \sum_{i=0}^{N-M} \vec R'_{M+i,M+i+1}\\
            &= \bm R'_{N+1} .
\end{align*}
Therefore, $\gamma' < \bm R'_{N+1}$ and so site $N+1$ is white in the $(\lambda', \alpha')$-process at time $\gamma'$. This implies that $N'_{\gamma'} \leq N$. Since $M'_{\gamma'} \geq M$, we have moreover $N'_{\gamma'}- M'_{\gamma'} \leq N-M$. This gives (i) and (ii).
\begin{figure}
    \centering

    \begin{subfigure}[b]{0.48\textwidth}
        \centering
        \begin{tikzpicture}[scale = .5]
            \node[circle,fill=red, draw = black, inner sep = .1 cm] at (0, 0) {};
            \foreach \n in {1,...,8}{
                \node[circle,fill=red, draw = black, inner sep = .1 cm] at (\n, 0) {};
                \draw (\n -.75,0) -- (\n-.25,0);
            }    
            \foreach \n in {9,...,10}{
                \node[circle,fill=white, draw = black, inner sep = .1 cm] at (\n, 0) {};
                \draw (\n -.75,0) -- (\n-.25,0);
            }    
            \node[circle,fill=blue, draw = black] at (4,0) {};
            \node at (4,-1) {$M$};
            \node at (8,-1) {$N$};
        \end{tikzpicture}

\vspace{.25 cm}
        
        \begin{tikzpicture}[scale = .5]
            \node[circle,fill=red, draw = black, inner sep = .1 cm] at (0, 0) {};
            \foreach \n in {1,...,7}{
                \node[circle,fill=red, draw = black, inner sep = .1 cm] at (\n, 0) {};
                \draw (\n -.75,0) -- (\n-.25,0);
            }    
            \foreach \n in {8,...,10}{
                \node[circle,fill=white, draw = black, inner sep = .1 cm] at (\n, 0) {};
                \draw (\n -.75,0) -- (\n-.25,0);
            }    
            \node[circle,fill=blue, draw = black] at (4,0) {};
            \node[circle,fill=blue, draw = black] at (5,0) {};
            \node[circle,fill=blue, draw = black] at (1,0) {};
            \node[circle,fill=blue, draw = black] at (2,0) {};
            \node at (5,-1) {$M'_{\gamma'}$};
            \node at (7,-1) {$N'_{\gamma'}$};
        \end{tikzpicture}
    \end{subfigure}
    \hfill
    \begin{subfigure}[b]{0.48\textwidth}
        \centering
        \begin{tikzpicture}[scale = .5]
            \node[circle,fill=red, draw = black, inner sep = .1 cm] at (0, 0) {};
            \foreach \n in {1,...,8}{
                \node[circle,fill=red, draw = black, inner sep = .1 cm] at (\n, 0) {};
                \draw (\n -.75,0) -- (\n-.25,0);
            }    
            \foreach \n in {9,...,10}{
                \node[circle,fill=white, draw = black, inner sep = .1 cm] at (\n, 0) {};
                \draw (\n -.75,0) -- (\n-.25,0);
            }    
            \node[circle,fill=blue, draw = black] at (4,0) {};
            \node at (4,-1) {$M$};
            \node at (8,-1) {$N$};
        \end{tikzpicture}

\vspace{.25 cm}
        
        \begin{tikzpicture}[scale = .5]
            \node[circle,fill=red, draw = black, inner sep = .1 cm] at (0, 0) {};
            \foreach \n in {1,...,3}{
                \node[circle,fill=red, draw = black, inner sep = .1 cm] at (\n, 0) {};
                \draw (\n -.75,0) -- (\n-.25,0);
            }    
            \foreach \n in {4,...,10}{
                \node[circle,fill=white, draw = black, inner sep = .1 cm] at (\n, 0) {};
                \draw (\n -.75,0) -- (\n-.25,0);
            }    
            \node[circle,fill=blue, draw = black] at (3,0) {};
            \node[circle,fill=blue, draw = black] at (1,0) {};
            \node at (4,-1) {$M'_{\gamma'}=N'_{\gamma'}$};
        \end{tikzpicture}
    \end{subfigure}
    \caption{Two example realizations of chase-escape with conversion in the coupling to prove (i) and (ii) in \thref{lem:X'}. In each subfigure (left and right), the top image is the process with $\lambda$ and $\alpha$ at time $\gamma$, and the bottom image is the process with $\lambda'$ and $\alpha'$ at time $\gamma'$. On the left, we have $M'_{\gamma'} > M$ and $N'_{\gamma'}< N$. On the right, we have $M'_{\gamma'}=N'_{\gamma'} < N$. In both subfigures, the claimed relations $N'_{\gamma'} \leq N$ and $N'_{\gamma'}- M'_{\gamma'} \leq N - M$ hold. }
    \label{fig:Y0}
\end{figure}


    \textbf{Step 2: (iii).} Let $t \geq 0$ and suppose that $Y_t = y \geq y' = Y_t'$ with $\lambda \geq \lambda'$ and $\alpha \leq \alpha'$. We will sample the next step of these chains using the Poisson edge clocks associated to the current chase-escape with conversion configuration on $\mathbb N$. 
    
    Adopting the convention that an $\Exp(0)$-distributed random variable is infinite with probability one, sample the following random variables independently:
\begin{itemize}
    \item $\tau_B \sim \Exp(1)$.
    \item $\tau_R \sim \Exp(\lambda')$ and $\sigma_R \sim \Exp(\lambda - \lambda')$. 
    \item $\tau_i \sim \Exp(\alpha)$ for $i=1,\hdots, y$, and $\sigma_i \sim \Exp(\alpha' - \alpha)$ for $i=1,\hdots, y'$. 
\end{itemize}

We then set the edge transition times:
\begin{itemize}
    \item $T_B = \tau_B = T_B'$.
    \item $T_R = \min( \tau_R, \sigma_R)$ and $T_R' = \tau_R$.
    \item $T_{R,i} = \tau_i$ for $i=1,\hdots, y$ and $T_{R,i}' = \min(\tau_i, \sigma_i)$ for $i = 1,\hdots, y'$.
\end{itemize}

Let $$\tau = \min\{ \tau_B, \tau_R, \sigma_R, \tau_1, \hdots, \tau_y, \sigma_1, \hdots, \sigma_{y'}\}.$$ Call the times $T_B, \hdots, T_{R,y}$ the \emph{$Y_t$ edge clocks}. Similarly, the times $T_B', \hdots, T_{R,y'}'$ are the \emph{$Y_t'$ edge clocks}. 

To obtain $Y_{t+1}$ and $Y_{t+1}'$, we update the edge(s) corresponding to the edge clock(s) equal to $\tau$. If only one edge clock updates, then the other process takes a flat step. We give the details of each situation:

\begin{itemize}
    \item If $\tau = \tau_B$ (hence $\tau = T_B = T_B'$), then for both $Y_t$ and $Y_t'$, we set the blue vertex of their respective critical regions to predate the adjacent red and so $Y_{t+1}=y-1 \geq y'-1 = Y_{t+1}'$. 
    \item If $\tau = \tau_R$ (hence $\tau = T_R = T_R'$), then both $Y_t$ and $Y_t'$ take a \emph{forward step} and infect their adjacent white vertices. Thus, $Y_{t+1} = y_t + 1 \geq y_t' + 1 = Y_{t+1}'$. 
    \item If $\tau = \sigma_R$, (hence $\tau = T_R$) then we set $Y_t$ to take a \emph{forward step} and $Y_t'$ to take a \emph{flat step}. We have that $Y_{t+1} = y + 1 > y' = Y_{t+1}'$. 
    \item If $\tau = \tau_i$ for $1 \leq i \leq y'$ (hence $\tau = T_{R,i} = T_{R,i}')$, then for both jump chains the $i^\textit{th}$-red vertex (counting from the rightmost red vertex in each chain as vertex $1$) converts to blue. This means that $Y_{t+1}=i-1=Y_{t+1}'$. 
    \item If $\tau = \tau_i$ for $y' < i \leq y$ (hence $\tau = T_{R,i}$) we set that the $i^\textit{th}$-red vertex of $Y_t$ converts to blue (again counting from the right) and $Y_t'$ takes a \emph{flat step} (notice that we only defined $T_{R,i}'$ for $i = 1, \dots, y'$, so in this case, no conversion happens in the $Y'_t$ chain). Hence, $Y_{t+1} =i-1 \geq y'=Y_{t+1}'$.
    \item If $\tau = \sigma_i$ for $ 1 \leq i \leq y'$ (hence, $\tau = T_{R,i}'$), then we set the $i^\textit{th}$-red vertex of $Y_t'$ to convert while $Y_t$ takes a \emph{flat step}. Hence $Y_{t+1} = y > i-1 = Y_{t+1}'$. 
\end{itemize}
See~Figure~\ref{fig:Y} for an example.


\begin{figure}
    \centering
 \begin{tikzpicture}[scale = .8]
    \draw[step=1cm,gray,very thin] (0,0) grid (12,5);
    \draw[dashed, thick] (0,1)
        -- ++(1,1)
        -- ++(1,1)
        -- ++(1,0)
        -- ++(1,1)
        -- ++(1,1)
        -- ++(1,-3)
        -- ++(1,0)
        -- ++(1,1)
        -- ++(1,-1)
        -- ++(1,0)
        -- ++(1,-2);
    \draw[thick] (0,2)
        -- ++(1,1)
        -- ++(1,1)
        -- ++(1,-1)
        -- ++(1,1)
        -- ++(1,1)
        -- ++(1,0)
        -- ++(1,-2)
        -- ++(1,1)
        -- ++(1,-2)
        -- ++(1,1)
        -- ++(1,0)
        -- ++(1,-3);
\end{tikzpicture}
    \caption{An example of the coupling in Step 2. The solid black path corresponds to $Y_t$ with $Y_0=2$ and the dashed path to $Y_t'$ with $Y_0'=1$. Our construction ensures $Y_t' \leq Y_t$ at all steps so long as $Y_0' \leq Y_0$.}
    \label{fig:Y}
\end{figure}
\begin{claim}
    Let $Y$ be the total number of upward steps taken by $(Y_t)_{t \in \mathbb{N}}$ until reaching 0, and similarly for $Y'$. It holds that $Y \succeq Y'$. 
\end{claim}

\begin{proof}

   Once the flat steps are removed, this coupling gives the correct marginals for $Y_t$ and $Y_t'$ conditional on the values of $Y_0$ and $Y_0'$. Moreover, whenever $Y'_t$ increases (when $\tau = \tau_R$), $Y_t$ is coupled to increase by the same amount. Whenever $Y_t$ decreases, the coupling above guarantees that the $Y_t$ chain does not fall below $Y'_t$. %
   We also have cases when $Y_t'$ moves downwards and $Y_t$ stays fixed, and $Y_t$ moves upwards with $Y'_t$ fixed. Because the coupling ensures that the chains remain ordered and couples the forward steps, it is immediate that $U'_\kappa(n') \preceq U_{\kappa}(n)$ whenever $n' \leq n$. 
\end{proof}

Recall that~\thref{lem:X} states that $X(\mathbb N, x_0, \alpha, \lambda) = N + U_\kappa(N-M)$ and \thref{lem:X'} that $X' = N'_{\gamma'} + U_{\kappa'}'(N'_{\gamma'}-M'_{\gamma'})$ with stochastic relations: $N'_{\gamma'} \preceq N$, $N'_{\gamma'} - M'_{\gamma'} \preceq N-M$, and $U_{\kappa'}' \preceq U_{\kappa}(n)$ for all $n' \leq n$. It follows that
\[
    X'(\mathbb N, x_0, \alpha', \lambda') = N'_{\gamma'} + U'_{\kappa'}(M'_{\gamma'}-N'_{\gamma'}) \preceq N + U_\kappa(N-M) 
                                          = X(\mathbb N, x_0, \alpha, \lambda),
\]
as desired.
\end{proof}

\section{Proof of \thref{thm:mono}: Stars} \label{sec:star}

For the star graph, we will return to thinking about the process in continuous time, $t\in [0,\infty)$, with the exponential clocks framework. For the star graph, the root is initially red, and red can only spread from the root. Accordingly,  the number of leaves that are ever red is equal to the number of leaves in state red or blue at the time the root becomes blue. 

In the following, we think of the spread of red from the root like a pure death process that starts with $n$ individuals (representing the $n$ white nodes), and where each one dies (here, this corresponds to becoming red) independently at rate $\lambda$. Let $\sigma(i) \in [0,\infty)$ denote the time of the $i$-th death with the convention that $\sigma(0) = 0$ and $\sigma(n+1) = \infty$. Note that $\sigma(i) - \sigma(i-1) \sim \Exp( (n-i +1) \lambda)$ for $1 \leq i \leq n$. 

Similarly, let $\sigma'(i)$ denote the time of the $i$-th death in a coupled pure death process that starts with $n' \leq n$ individuals that each die at rate $\lambda ' \leq \lambda$. Therefore, $\sigma'(i) - \sigma'(i-1) \sim \Exp((n'-i)\lambda')$. Because $n' \leq n$ and $\lambda' \leq \lambda$, for each $0 \leq i \leq n'$, it is clear that $(n'-i)\lambda' \leq (n-i)\lambda$. Therefore, we couple these two processes such that $$\sigma'(i) - \sigma'(i-1) \geq \sigma(i)-\sigma(i-1)$$ for each $0 \leq i \leq n'$. Notice that this also implies that $\sigma'(i+1) - \sigma'(k) \geq \sigma(i+1) - \sigma(k)$ for any $k \leq i \leq n'$.  For $\alpha' \geq \alpha$, let $T_i \sim \Exp(\alpha) + \Exp(1)$ be independent and coupled with $T_i' \sim \Exp(\alpha') + \Exp(1)$ so that $T_i' \leq T_i$. $T_i$ is the time it takes for the $i$th red leaf to convert to blue and then spread blue to the root (similarly for $T_i'$). Additionally, let $T_0\sim \Exp(\alpha)$ and $T_0' \sim \Exp(\alpha')$ be coupled so that $T_0' \leq T_0$. $T_0$ and $T_0'$ represent the time it takes the root to convert to blue. 

For $0 \leq i \leq n$, let 
\[
    M_i = \min_{0 \leq k \leq i} \sigma(k) + T_k
\] 
and analogously for $M_i'$. 
Define
\begin{align}
I & = 
\min\{ 0 \leq i \leq n \colon \sigma(i+1) - M_i > 0\},\label{eq:Xm}  \\
I ' & = 
\min\{ 0 \leq i \leq n' \colon \sigma'(i+1) - M'_i > 0\}.  
\end{align}
We claim that $X  = I + 1$ and $X'  = I' + 1.$
To see this, note that the times $\sigma(i)$ and $T_i$ can be coupled to reflect the time red spreads to the $i$th leaf and the time the $i$th red leaf would convert the root to blue, as described above. The smallest index $I$ such that that $\sigma(I+1) > M_I$, equivalently $\sigma(I+1) - M_I >0$, indicates that the root has been colored blue (by one of the first $I$ red leaves or by conversion) before the $(I+1)$th leaf turns red. So $I$ gives how many leaves are at some point colored red and $X=I+1$ to account for the root. Similar reasoning gives the characterization of $X'=I'+1$. 

We may rewrite $\sigma(i+1) - M_i$ as follows
\begin{align}
    \sigma(i+1) - M_i &= \sigma(i+1) - \min_{0 \leq k \leq i} (\sigma(k) - T_k)\\
    &= \max_{0 \leq k \leq i} (\sigma(i+1) - \sigma(k) - T_k) \label{eq:alt}
\end{align}
Recall that from our coupling, we have that for all $0 \leq k \leq i$
$$\sigma(i+1) - \sigma(k) \leq \sigma'(i+1) - \sigma'(k), $$
so 
$$\sigma(i+1) - \sigma(k) - T_k \leq \sigma'(i+1) - \sigma'(k) - T_k.$$
Similarly by the coupling, because $T_k \geq T_k'$, we also have that 
$$\sigma(i+1) - \sigma(k) - T_k \leq \sigma'(i+1) - \sigma'(k) - T_k'.$$
Therefore, for $0 \leq i \leq n'$,
\begin{align*}
    \max_{0 \leq k \leq i} (\sigma(i+1) - \sigma(k) - T_k )&\leq \max_{0 \leq k \leq i} (\sigma'(i+1) - \sigma'(k) - T_k'),
\end{align*}
which by \eqref{eq:alt} implies that
\begin{align}
    \sigma(i+1) - M_i & \leq \sigma'(i+1)-M'_i. \label{eq:M<M'}
\end{align}

Suppose now that $I = i$ with $0 \leq i \leq n'$. Then $0 < \sigma(i+1) - M_i$, and by \eqref{eq:M<M'} 
$$0 < \sigma(i+1) - M_i \leq \sigma'(i+1) - M'_i.$$
which yields $I' \leq i = I$. On the other hand, if $I = i$ with $i > n'$, then $I' \leq n' < i = I$. Since $X = I+1$ and $X'=I'+1$, this proves that $X' \preceq X$. 

\begin{remark}
    We point out that the passage time for construction of $X$ on trees in Section~\ref{sec:tree} does not appear to easily give monotonicity of $X$ in $\lambda$ and $n$ if one simply couples the passage times for the $(\lambda, \alpha, n)$ and $(\lambda', \alpha',n ')$ processes in the canonical way. Without the further coupling we give here, of assigning the conversion time and then predation of the root dynamically, it is possible that speeding up an edge passage time or adding a new edge could dramatically reduce $X$ compared to $X'$. 
\end{remark}

\section{Proof of \thref{thm:mono}: The complete graph} \label{sec:Kn}

 As with $S_n$, it is more convenient to work in continuous time $t\in [0,\infty)$ using the exponential clocks in the definition of chase-escape with conversion.  Let $(R_t)_{t \in \mathbb N}$ be the number of vertices in state $r$ at time $t$. Similarly, let $(B_t)_{t \in \mathbb N}$ be the number of blue vertices. Let $W_t = n - R_t - B_t$ be the number of vertices in state $w$. Conditional on $R_t = r, B_t=b,$ and $W_t = w = n - (r +b)$, we have the following transition probabilities at the next jump time:

$$p^+_{r,b} = \f{ \lambda w r}{\lambda r w + b r + \alpha r}; \quad p^-_{r,b} = \f{br + \alpha r}{\lambda r w + b r + \alpha r},$$
where $p^+_{r,b}$ is the probability $R_t$ increases by 1 and $p^-_{r,b}$ is the probability $R_t$ decreases by one while $B_t$ increases by 1. As observed for chase-escape on the complete graph in \cite{complete}, $r$ is common to all summands and can thus be factored and canceled. The common $r$ term persists with conversion. Hence, for all $1 \leq r \leq n$ we have

\begin{align} p^+_{r,b} := \f{ \lambda w }{\lambda  w + b  + \alpha }; \quad p^-_{r,b}:= \f{b + \alpha }{\lambda w + b  + \alpha }. \label{eq:p+-}\end{align}

\cite[Theorem 1.1]{complete} uses this observation to deduce that the number of sites ever infected by red can be characterized by a simple process involving independent pure birth and pure death processes. This is still true in chase-escape with conversion, but with the addition of constant immigration to the birth process. We describe these processes below.

The death process starts with $n-1$ individuals, each of which die at exponential rate $\lambda$. Let $\mathcal W_t$ denote the number of individuals remaining at time $t$. The birth process starts with 0 individuals who generate an additional individual at exponential rate $1$. Additionally, there is a constant immigration process where a new individual is added at exponential rate $\alpha$ (that does not depend on the population size). Let $\mathcal B_t$ denote the number of individuals at time $t$. Additionally, let $\sigma(i)$ denote the time (in continuous units) of the $i$th jump in the death process and $\rho(i)$ denote the time of the $i$th jump in the birth process. By construction,  $\sigma(i+1) - \sigma(i) \sim \Exp( \lambda (n-1-i))$ for $0 \leq i \leq n-2$ and $\rho(i+1) - \rho(i) \sim \Exp(i + \alpha)$ for $i \geq 0$.

One can check that $\mathcal W_t$ at its jumps corresponds to $W_t$ and $\mathcal B_t$ tracks $B_t$ (with a time change, since we have removed the scaling corresponding to $R_t$). This is true because the transition probabilities for either chain are given by \eqref{eq:p+-}. Letting $$\rho_*:= \rho( \min \{ i \geq 1 \colon \rho(i) < \sigma(i) \}),$$ this correspondence holds up to the stopping time
$$\tau = \sigma(n-1) \wedge \rho_*$$
at which point either no white or no red vertices remain. We can use this coupling to characterize $X$ as:
    $$X(K_{n}, x_0, \alpha, \lambda) =  n - \mathcal W_\tau.$$
We obtain monotonicity of $X$ in $\lambda$, $\alpha$, and $n$ by virtue of this coupling. To go into more detail, increasing $\lambda$ speeds up the jump times $\sigma(i)$, this stochastically increases $\rho_*$, thus decreasing $\mathcal W_\tau$ and increasing $X$. On the other hand, increasing $\alpha$ speeds up the jump times $\rho(i)$, which stochastically decreases $\rho_*$, thus increasing $\mathcal W_\tau$ and decreasing $X$. Lastly, increasing $n$ speeds up the jump times $\sigma(i)$, thus this increases $\rho_*$, which results in a stochastic increase to $X$. 

\begin{remark}
    An interesting future inquiry is generalizing the results from \cite{complete} to chase-escape with conversion. The main difference is the constant immigration at rate $\alpha$, which appears to have a non-trivial impact on the key tool that pure birth and death processes are representable as time changes of unit Poisson processes.
\end{remark}

\section{Proof of \thref{thm:pt}} \label{sec:bounds}

In this section, we will take $\alpha>0$ to be fixed and indicate the dependence of the measure $\P$ on $\lambda$ with a subscript $\P_\lambda$. First we prove the lower bound. We proceed by showing the stronger statement that $\E_\lambda[X]<\infty$ for sufficiently small $\lambda$. %
Let $\Gamma_k$ be the set of all vertex self-avoiding paths of length $k$ starting at $x_0$ that are present in $G$. Interpret $\Gamma_0 = (x_0)$ as the path of length 0 starting at $x_0$.  We say that red \textit{survives} on a path $\gamma \in \Gamma_k$ if, for chase-escape with conversion restricted only to the passage times along $\gamma$, the terminal vertex of $\gamma$ is at some point colored red. We emphasize that survival along $\gamma$ ignores the influence of red and blue from all edges not belonging to $\gamma$. 
 
Let $A_k = A_k(\lambda)$ be the event that $k$ is ever colored red in chase-escape with conversion on the infinite path $0,1,2,\hdots$ with $0$ initially red. Observe that $$\P_\lambda(\text{red survives on a path $\gamma$ of length $k$}) = \P_\lambda(A_k).$$ A necessary condition for $A_k$ is that at each site $i=0,1,\hdots, k-1$, red spreads from $i$ to $i+1$ before $i$ converts to blue. This happens with probability $\lambda/ (\lambda + \alpha)$ independently at each site. Thus,
$$\P_\lambda(A_k) \leq \left(\f{\lambda}{\lambda+\alpha} \right)^k.$$
 
Let $H\subseteq G$ denote the set of sites that are ever colored red, so that $X = |H|$. For any vertex $v \in H$ there must be a path along which red survives with $v$ the terminal point. Hence,
\begin{equation}
\label{3}
X \preceq  \sum^\infty_{k=0} \sum_{\gamma \in \Gamma_k} \ind{\text{red survives on } \gamma}.
\end{equation}
Taking expectation and using the bound on $\P_\lambda(A_k)$ plus the fact that $|\Gamma_0|= 1$ and $|\Gamma_k|\leq d (d-1)^{k-1}$ for $k \geq 1$ gives 
\begin{align}
    \E_\lambda[X] \leq \sum_{k=0}^\infty |\Gamma_k| \P_\lambda(A_k) \leq  {1 +} \f{d \lambda}{\lambda + \alpha} \sum_{k=0}^\infty \left(\f {(d-1)\lambda }{\lambda + \alpha}\right)^{k}  . \label{eq:ER}
\end{align}
This quantity is finite so long as $(d-1)\lambda/(\lambda + \alpha)<1$, which holds for $\lambda < \alpha/(d-2)$. 

Now we prove the upper bound on $\lambda_c(\alpha)$. 
We will use here the notation of defining the process through random variables associated to directed edges which was introduced in Section \ref{sec:tree}. For each vertex $x \in G$, let $\mathcal{N}(x)$ be the collection of vertices connected to $x$. As a reminder of the necessary notation, recall that for any vertex $x \in G$ and $y$ a neighbor of $x$, $y \in \mathcal{N}(x)$, we define $\mathcal{E}_{x,y}$ to be the directed edge connecting $x$ to $y$ and $\mathcal{E}_{y,x}$ as the directed edge connecting $y$ to $x$. As before, we assign the random variable $\vec R_{x,y}$ to the edge $\mathcal{E}_{x,y}$, which defines the time until red spreads from $x$ to $y$ and the random variable $\vec B_{y,x}$ to the edge $\mathcal{E}_{y,x}$ to be the time until blue spreads from $y$ to $x$. Let $C_x$ be the time until vertex $x$ converts from red to blue. With this notation in place, we call a vertex $x \in G$ \emph{good} if $$\max_{y \in \mathcal{N}(x)} \vec R_{x,y} < \min_{y \in \mathcal{N}(x)} \vec B_{y,x} \wedge C_x.$$ 
In words, this means that a vertex $x$ is good if the time it would take to spread red to all its neighboring sites is less than the shortest possible length of time after $x$ becomes red before blue can affect the site, either through conversion or spreading. 

If $\mathcal C$ is the connected component of good sites containing $x_0$, it is easy to deduce that all sites of $\mathcal C$ will at some point be colored red. This is because red spreads to all of its neighbors from each site in $\mathcal C$ before being affected by blue. Thus, {$X \succeq |\mathcal C|$}. We then have $\P_\lambda(X = \infty) \geq \P_\lambda(|\mathcal C| = \infty)$. Recall that $p_c = p_c(G)$ is the critical threshold for Bernoulli site percolation on $G$. It suffices to show that for each $x \in G$, $\P_\lambda(x \text{ is good})>p_c$ for $\lambda$ sufficiently large as this implies that $\P_\lambda(X = \infty) > 0$. 

The probability a given site {$x$ is good is the probability the maximum of $|\mathcal{N}(x)|$ independent $\Exp(\lambda)$ random variables is smaller than the minimum of $|\mathcal{N}(x)|$ independent $\Exp(1)$ random variables and one independent $\Exp(\alpha)$ random variables. Because $|\mathcal{N}(x)| \leq d$, the probability $x$ is good is bounded below by }the probability that $R_d$, the maximum of $d$ independent $\Exp(\lambda)$ random variables, is smaller than $B_d$, the minimum of $d$ independent $\Exp(1)$ random variables and one independent $\Exp(\alpha)$ random variable. Thus,
$$\P_\lambda(x \text{ is good}) {\geq } \P_\lambda(R_d < B_d) = \int_0^\infty \P_\lambda{(R_d <  z) f_{B_d}(z)} dz,$$
{where $f_{B_d}(z)$ is the density of $B_d$.} 

We would like to find a condition on $\lambda$ that ensures this integral is strictly larger than $p_c$. 
One way to do this is to note that $R_d \preceq R'_d \sim \text{Gamma}(d,\lambda)$ i.e. the sum of $d$ independent $\Exp(\lambda)$ random variables, since the maximum of $d$ independent $\Exp(\lambda)$ random variables is bounded by the sum. This has a simpler formulation
$$\P_\lambda(x \text{ is good})\geq \P_\lambda( R_d' < B_d)  = \left(\frac{\lambda}{\lambda + d + \alpha}\right)^d.$$
The formula is easily derived from iteratively applying the memoryless property of the exponential distribution since each of the $d$ independent $\Exp(\lambda)$ random variables in the sum comprising $R_d'$ must occur before the $\Exp(d + \alpha)$-distributed random variable $B_d$. 

Letting $p = p_c$, some algebra then gives that 
$$\lambda > \frac{p^{\frac{1}{d}}(d+\alpha)}{1 - p^{\frac{1}{d}}} \implies \left(\frac{\lambda}{\lambda + d + \alpha} \right)^d > p  .$$ 
Replacing $p^{1/d}$ with $1$ in the numerator on the left yields that whenever $\lambda \geq (d+\alpha)/(1- p^{1/d})$ we have $\P_{\lambda}(X=\infty)>0$. Thus, $\lambda_c(\alpha)$ is no larger than $(d+\alpha)/(1- p^{1/d})$.

 \bibliographystyle{plain}
 \bibliography{paper.bib}

\end{document}